\begin{document}

\title{Guaranteed  optimal reachability control of reaction-diffusion equations 
using  one-sided Lipschitz constants and model reduction}
\author{A. Le Co\"ent \inst{1}
\and
L. Fribourg \inst{2}
}

\institute{Department of Computer Science, Aalborg University \\
Selma Largerl\o fs Vej 300, 9220 Aalborg, Denmark \\
\email{adrien.le-coent@ens-cachan.fr} \and
LSV, ENS Paris-Saclay, CNRS, Universit\'e Paris Saclay \\
91 Avenue du Pr\'esident Wilson, 94235 Cachan Cedex, France \\
\email{fribourg@lsv.fr}}
  

\graphicspath{{./figures/}}

\titlerunning{Distributed control synthesis using Euler's method}

\authorrunning{A. Le Co\"ent, 
  L. Fribourg} 

\maketitle



\begin{abstract}
We show that, for any spatially discretized system  of 
reaction-diffusion, the 
approximate solution given by the explicit 
Euler time-discretization scheme  converges to the exact 
time-continuous solution, provided that diffusion coefficient
be sufficiently large. By ``sufficiently large'', we mean that the 
diffusion coefficient value makes the 
{\em one-sided Lipschitz constant} of
the reaction-diffusion system negative. We apply this 
result to solve a finite horizon control problem for a 1D
reaction-diffusion example. We also explain how to perform
{\em model reduction} in order to improve the efficiency of the method.
\end{abstract}


\section{Introduction}
\subsection{Guaranteed reachability analysis}

Given a system of Ordinary Differential equations (ODEs) of dimension $n$
satisfying standard conditions of existence and uniqueness of the solution, the area of 
{\em Numerical Analysis} makes use of numerical tools in order to
compute the approximate value of the solution, starting at an initial point of $\mathbb{R}^n$,
with high accuracy:
1st order methods (explicit/implicit Euler method, trapezoid rule), higher-order Runge-Kutta methods, etc.
In contrast, the area of {\em Guaranteed (or Symbolic) Analysis} is devoted to the construction of an overapproximation of the set of solutions that start, not at a single point of $\mathbb{R}^n$, but from a dense compact set of initial 
points. Guaranteed analysis, in its modern form, has been initiated in the 60's by R.E. Moore and his creation of {\em Interval Arithmetic} \cite{Moore66}: the set of solutions
(or trajectories) are overapproximated by a sequence of 
``rectangular sets'', i.e., cross-product of intervals of $\mathbb{R}$. A set of arithmetic and differential
calculus has been created for manipulating such  sets. 
An overapproximation of the set of trajectories is computed
using a Taylor development up to 
some order and an overestimation 
of the ``Lagrange remainder''. The method has been 
considerably refined  in the 90's \cite{BerzH98,BerzM98,Lohner87,Nedialkov99,NedialkovKS04}.
These recent techniques make use of different convex data structures
such as parallelepipeds \cite{Lohner87}
or zonotopes \cite{girard2005reachability,Kuhn:1998:RCO:287056.287083}
instead of rectangular sets in order to enclose the flow of ODEs.
%

Such methods are typically applied to the formal {\em proof} of correctness of
ODE integration, and more generally, to guarantee that the solutions
of the ODEs satisfy some desired properties. 
%
Guaranteed reachability analysis generally treats {\em linear} systems.
Extensions to nonlinear systems have been proposed, e.g., in
\cite{AlthoffSB08}, using
local linearizations 
(see also \cite{MitchellBT01,MitchellT03}).

\subsection{Guaranteed optimal control}

In presence of inputs, we can use guaranteed analysis to describe
a law that allows the
system to satisfy a desired property. 
This corresponds to the topic of
{\em guaranteed (or correct-by-design) control synthesis}.
Several works have reecently applied guaranteed analysis to 
{\em optimal} control synthesis.
Thus, in \cite{SchurmannA17,SchurmannKA18}, the authors
focus on a (finite
time-horizon) optimal control procedure with a formal
guarantee of safety constraint satisfaction, using zonotopes
as
state set representations.
%
In \cite{Estrela17}, the authors focus on (periodically) sampled
systems, and perform 
reachability analysis 
using convex
polytopes as state set representations. 
%
In \cite{kapela2009lohner,maidens2014reachability,fan2018simulation,ReissigR19,RunggerR17}, 
the authors
construct 
an over-approximation of the set
of trajectories 
using a growth bound
(bounding the distance of neighboring trajectories)
exploiting the notion of {\em one-sided Lipschitz constant}
(also called ``logarithmic norm'' or ``matrix norm'').
The notion of ``one-sided Lipschitz (OSL) constant'' has been introduced
independently by Dahlquist  \cite{dahlquist1958stability} and Lozinskii \cite{lozinskii1958error} in order to derive error bounds in initial value problems (see survey in \cite{soderlind2006logarithmic}).
We used ourselves OSL constants 
in the context of symbolic optimal control in \cite{LeCoentF19}. 
The main difference with  previous work
 \cite{kapela2009lohner,maidens2014reachability,fan2018simulation,ReissigR19,RunggerR17}
is that our method makes use of  explicit Euler's algorithm 
for ODE integration (cf. \cite{AdrienRP17,SNR17}) instead of
sophisticated algorithms  such as 
Lohner's algorithm \cite{kapela2009lohner}
or interval Taylor series methods \cite{NedialkovKS04}. 
This leads us to a simple
implementation of just
a few hundred lines of Octave
(see \cite{oslator}).

As explained in \cite{Saluzzi18}, 
using the Dynamic Programming (DP) \cite{Bellman57}
one can approximate the ``value'' of the solution of
Hamilton-Jacobi-Bellman (HJB) equations. 
In \cite{Falcone1999,Saluzzi18}, 
the authors thus show how to use
finite difference schemes,
Euler time integration and DP
for solving finite horizon control problems.
Furthermore, they give {\em a priori} errors estimates which
are first-order in the size $\Delta t$ of the time discretization step; 
however, the error involves a constant $C(T)$ 
which depends {\em exponentially}
on the length $T$ of the finite horizon\footnote{$C(T)=O(e^{L_fT})$ where
$L_f$ is the Lipschitz constant associated with vector field $f$.}.
We solve here finite horizon control problems along the same lines
(using finite difference, explicit Euler and DP) but, 
under the hypothesis of OSL {\em negativity} (see section \ref{ss:rde}), 
we obtain an
error upper bound that is {\em linear} in $T$
(see Section~\ref{ss:error}, Theorem \ref{th:2}).


\subsection{Reaction-diffusion equations}\label{ss:rde}

It is natural to adapt 
the optimal control methods of ODEs 
to the control of 
Partial Differential Equations (PDEs).
This can be done by transforming
the PDE into (a vast system of) ODEs, using space discretization
techniques such as finite difference or finite element methods.
%
%
In the present work, we focus on
a particular class of 
{\em non-linear} PDEs called ``reaction-diffusion'' equations.
Reaction-diffusion equations cover a variety of particular cases with important
applications in mathematical physics, and in biological models such as
the Schl\"ogl model or the FitzHugh-Nagumo system \cite{CasasRT18}.
The problem of optimal control
of reaction-diffusion equations has been recently
the topic of many works of (classical) numerical analysis: see, e.g.,
\cite{Barthel2010,CourtKP18,Finotti12,GriesseV05,MouraF11,MouraF13}.
%
%

The notion OSL constant can be naturally extended to 
PDEs and reaction-diffusion equations in particular, as shown in 
\cite{arcak2011certifying,aminzare2013logarithmic,aminzare2014guaranteeing,aminzare2016some}.
In these works, the authors focus on the case where the OSL constant associated with the 
reaction-diffusion equation is {\em negative}. In this case, the system has 
a {\em contractivity}  (or ``incremental stability'') property which expresses the fact that all solutions converge exponentially to each other (see \cite{sontag2010contractive}).

In this work, we also study reaction-diffusion equations with negative OSL constants, but the equations are equipped with {\em control inputs}, and  
the problem of controlling these inputs in an optimal way is here considered.

\subsection{Model reduction}

In order to reduce the large dimension of ODE systems originating
from the PDE space discretization, Model Order Reduction (MOR) techniques 
are often used in conjunction with the analysis of ODE systems.
The idea is to first infer the 
optimal control at a {\em reduced level}, then apply it at the original level.
In the field of guaranteed analysis, the MOR technique
of ``balanced truncation'' was used to
treat {\em linear} systems
(e.g., \cite{Althoff17,HanK04,HanK06,Syncop15}).
In~\cite{kalisek14}, a MOR technique based on spectral element method
was coupled to an HJB approach
for application to advection-reaction-diffusion systems
(cf. \cite{KaliseK18} for application to
semilinear parabolic PDEs).
The MOR technique of ``Proper 
Orthogonal Decomposition (POD)''  was
coupled to an HJB approach
in \cite{AllaFV17,AllaS19,KunischVX04}.
%
%
%
Here, we couple our  HJB-based method to
a simple {\em ad hoc} reduction  method (see Section~\ref{ss:MOR}).\\

The plan of the paper is as follows:
We explain how to convert the reaction-diffusion equation
into a system of ODEs
by domain discretization in
Section~\ref{ss:space}, and how to approximate the solution of the latter system
using the explicit Euler scheme of time integration in
Section \ref{ss:Euler}.
Our procedure for solving finite horizon
control problems is explained
in Section \ref{ss:approx}.
In Section
\ref{ss:error}, we give an upper bound to the error between the approximate
value thus computed and the exact optimal value.
In Section \ref{ss:MOR}, we explain how to perform MOR in order to treat systems of larger dimension.
We conclude in Section \ref{sec:final}.

\section{Optimal Reachability Control of Reaction-Diffusion Equations}
Let us consider the special 
class of PDEs called ``reaction-diffusion'' equations. 
For the sake of notation simplicity, we focus on 1D reaction-diffusion 
equations with Dirichlet boundary conditions
(the domain $\Omega$ is of the form $[0,L]\subset \mathbb{R}$), but the method applies to 
2D or 3D reaction-diffusion equations with other boundary conditions.
A 1D {\em reaction-diffusion} system with Dirichlet boundary conditions
is of the form:
\begin{align*}
& \frac{\partial {\bf y}(t,x)}{\partial t} =\sigma\frac{\partial^2 {\bf y}(t,x)}{\partial x^2} + f({\bf y}(t,x)), \ \ \ t\in [0,T],\ x\in \Omega\equiv [0,L].
\\
& {\bf y}(t,0) = u_0(t), \quad {\bf y}(t,L) = u_L(t), \ \ \ t\in [0,T],
\\
& {\bf y}(0,x) = {\bf y}_0(x),\ \ \ x\in \Omega\equiv [0,L].
\end{align*}
Here, ${\bf y}={\bf y}(t,x)$ is an $\mathbb{R}$-valued unknown function,
$\Omega$ is a bounded domain in $\mathbb{R}$ with boundary $\partial\Omega :=\{0,L\}$,
and $f$ is a function from $[0,T]\times\Omega$ to $[0,1]$.
Also ${\bf y}_0(x)$ is a given function called
``initial condition'', and $\sigma$ a positive constant,
called ``diffusion constant''.

The boundary control $u(\cdot) := (u_0(\cdot), u_L(\cdot))$ 
that we consider here,
is a {\em piecewise constant} (or ``staircase'')
function from $[0,T]$ to a {\em finite} set $U\subset [0,1]\times[0,1]$.The 
control $u(t)$ changes
its value {\em periodically} at $t=\tau,2\tau,\dots$.
We assume that $T=k\tau$ for some positive integer $k$.
The constant $\tau$ is called the ``switching (or sampling) period''.

Given an initial condition ${\bf y}_0(\cdot)$ such that ${\bf y}_0(x)\in [0,1]$ for all 
$x\in [0,L]$,
we assume that, for any boundary control $u(\cdot)$, 
the solution ${\bf y}(\cdot,\cdot)$  of the system exists, is unique,
and ${\bf y}(t,x)\in[0,1]$ for all $(t,x)\in [0,T]\times [0,L]$.

\subsection{Domain discretization} \label{ss:space}
A well-known approach in numerical analysis of PDEs 
(see, e.g., \cite{Koto08})
is to discretize in space
by finite difference or finite element methods in order to transform 
the PDE into a system of ODEs. 

Let $M$ be a positive integer, $h=L/(M+1)$, and let $\Omega_h$ be a uniform
grid with nodes $x_j=jh$, $j=1,\dots,M$. By replacing the 2nd order spatial
derivative with the second order centered difference, we obtain 
a space-discrete  approximation:
$$\frac{dy}{dt}=\sigma{\cal L}_h y+\sigma\varphi_h(t,u)+f(t,y),$$

with $y(t)=[y^1(t),\dots,y^{M}(t)]^T$, $y^j(t)\approx {\bf y}(t,x_j)$, and
\[
{\cal L}_h=\frac{1}{h^2}
  \begin{bmatrix}
    -2 & 1 & 0 & \cdots & 0 \\
    1 & -2 & 1 &\cdots & 0\\
    0 & 1 & -2 & \cdots & 0\\
    \ & \ & \cdots &\ &\ \\
    0 & 0 & \cdots & 1 & -2
  \end{bmatrix}
\]

$$\varphi_h(t,u)=\frac{1}{h^2}[u_0(t),0,\dots,0,u_L(t)]^\top.$$

The point $y(t)$, often abbreviated as $y$, is thus an element of $S=[0,1]^{M}$.
\subsection{Explicit Euler time integration}\label{ss:Euler}
Let us abbreviate the equation 
$$\frac{dy}{dt}=\sigma{\cal L}_h y+\sigma\varphi_h(t,u)+f(t,y)$$
by:
$$\frac{dy}{dt}=f_u(t,y).$$
We denote by $Y_{t,y_0}^u$, the solution 
$y$ 
of the system at time~$t\in[0,\tau)$ 
controlled by mode $u\in U$, for initial condition~$y_0$.
Given a sequence of modes (or ``pattern'') $\pi := u_k\cdots u_1\in U^k$, we denote by
$Y_{t,y_0}^{\pi}$
the solution of the system for  mode $u_k$ on $t\in [0,\tau)$
with initial condition~$y_0$,
extended continuously with the solution of the system for mode $u_{k-1}$ on $t\in[\tau,2\tau)$, and so on iteratively until mode $u_1$ on $t\in[(k-1)\tau,k\tau]$.

Let us now approximate the solution of the system by performing time integration with the
{\em explicit Euler} scheme. This yields:
$$y_{n+1}=y_n+\tau f_u(t_n,y_n),$$
%
Here $y_n$ is an approximate value of $y(t_n)$.
%
Given  a starting point $z\in {\cal X}$ and a mode $u\in U$, 
we denote 
by  $\tilde{Y}_{t,z}^u$ the Euler-based image of~$z$ at time $t$ via $u$
for $t\in[0,\tau)$.
We have:
$\tilde{Y}_{t,z}^u := z+ t\ f_u(z).$
We denote similarly
by $\tilde{Y}^\pi_{t,z}$ the Euler-based image of~$z$ via 
pattern $\pi\in U^k$ at time $t\in [0,k\tau]$.


\subsection{Finite horizon control problems}\label{ss:approx}


Let us now explain the principle of the method of optimal control
of ODEs used in~\cite{LeCoentF19}, in the present context.
We consider the {\em cost function}: $J_{k}:[0,1]^M\times U^k\rightarrow \mathbb{R}_{\geq 0}$ 
defined by:
$$J_{k}(y,\pi)=\|Y_{k\tau,y}^{\pi}-y_f\|,$$
where 
$\|\cdot\|$ denotes the Euclidean norm in $\mathbb{R}^M$,
and
$y_f\in[0,1]^M$ is a given ``target'' 
state.

We consider the {\em value function} ${\bf v}_k:[0,1]^M\rightarrow \mathbb{R}_{\geq 0}$
defined by:
$${\bf v}_k(y) := \min_{\pi\in U^k}\{J_{k}(y,\pi)\}\equiv
\min_{\pi\in U^k}\{\|Y_{k\tau,y}^{\pi}-y_f\|\}.$$ 

Given $k\in\mathbb{N}$ and $\tau\in\mathbb{R}_{>0}$, we consider the following {\em finite time horizon optimal control problem}: 
 Find for each $y\in[0,1]^M$
\begin{itemize}
\item the {\em value} 
${\bf v}_k(y)$, i.e.
%
$$\min_{\pi\in U^k}\{\|Y_{k\tau,y}^{\pi}-y_f\|\},$$

\item and an {\em optimal pattern}:
$$\pi_k(y) := arg\min_{\pi\in U^k}\{\|Y_{k\tau,y}^{\pi}-y_f\|\}.$$
\end{itemize}


In order to solve such optimal control problems, 
a classical ``direct'' method consists in
{\em spatially discretizing} the state space $S=[0,1]^M$
(i.e., the space of values of $y$).
We consider here a uniform partition of~$S$ into a finite number $N$
of cells of equal size:
in our case , this means that interval $[0,1]$ is divided into
$K$ subintervals of equal size, and $N=K^M$. A cell 
thus corresponds  to a $M$-tuple of subintervals. The center of a cell
coresponds to the $M$-tuple of the subinterval midpoints.
The associated grid ${\cal X}$ is the set of centers
of the cells of~$S$. The center~$ z\in{\cal X}$ of a cell $C$ is considered as the $\varepsilon$-{\em representative} of 
all the points of $C$. We suppose that
the  cell size is such that $\|y - z\|\leq\varepsilon$,
for all $y\in C$ (i.e. $K\geq \sqrt{M}/2\varepsilon$).
%
In this context, the direct method proceeds as follows
(cf. \cite{LeCoentF19}): we consider the points of ${\cal X}$
as the vertices of a finite oriented graph;
there  is a connection from $ z\in {\cal X}$ 
to $ z'\in {\cal X}$ if $ z'$ is the $\varepsilon$-representative
of the Euler-based image $(z +\tau f_u( z))$ of~$z$, for some $u\in U$.
We then compute using dynamic programming
the ``path of length $k$ with minimal cost'' starting at $ z$: 
such a path is a sequence of $k+1$ 
connected points $ z\  z_k\  z_{k-1}\ \cdots\  z_1$ of
${\cal X}$ which minimizes the distance $\| z_1-y_f\|$.
This procedure allows us to compute
a pattern $\pi^\varepsilon_k(z)$ of length
$k$, which approximates the optimal pattern $\pi_k(y)$.

\begin{definition}
The function $next^{u}: {\cal X}\rightarrow {\cal X}$ 
is defined by:
\begin{itemize}
\item 
    $next^{u}( z)= z'$, where
    $ z'$ 
    is the $\varepsilon$-representative of $\tilde{Y}_{\tau, z}^{u}$.
\end{itemize}
\end{definition}
\begin{definition}
For all point $ x\in {\cal X}$, the {\em spatially discrete
value function} ${\bf v}^{\varepsilon}_k:{\cal X}\rightarrow \mathbb{R}_{\geq 0}$ 
is defined by:
\begin{itemize}
\item for $k=0$, ${\bf v}_k^{\varepsilon}( z)=\| z-y_f\|$,
\item for $k\geq 1$,
%
    ${\bf v}_{k}^{\varepsilon}( z)=\min_{u\in U}\{{\bf v}_{k-1}^{\varepsilon}(next^u( z))\}$.
\end{itemize}
\end{definition}
%
\begin{definition}
The {\em approximate  optimal pattern of length $k$}
associated
to $z\in{\cal X}$,
denoted by $\pi_k^{\varepsilon}( z)\in U^k$,  is 
defined by:
\begin{itemize}
\item if $k=0$, $\pi_k^{\varepsilon}( z)=\mbox{nil}$,
\item if $k\geq 1$, $\pi_k^{\varepsilon}( z) = {\bf u}_k( z) \cdot \pi'$
where
$${\bf u}_{k}( z)=arg \min_{u\in U}\{{\bf v}_{k-1}^{\varepsilon}(next^u( z))\}$$
and $\pi' =\pi_{k-1}^{\varepsilon}( z')$
\ \ with \ $ z'=next^{{\bf u}_k( z)}( z)$.
\end{itemize}
\end{definition}
%
It is easy to
construct
a procedure $PROC_k^{\varepsilon}$ which takes a point $ z\in {\cal X}$ as input, and returns 
an approximate optimal pattern~$\pi_k^{\varepsilon}\in U^k$.
%

\begin{remark}
The complexity of  $PROC_k^\varepsilon$ is  
$O(m\times k\times N)$
where $m$ is the number of modes ($|U|=m$), 
$k$ the time-horizon length ($T=k\tau$)
and $N$ the number of cells of ${\cal X}$
($N = K^M$ with $K=\sqrt{M}/2\varepsilon$).
\end{remark}

%
%

\subsection{Error upper bound}\label{ss:error}
Given a point $y\in S$ of $\varepsilon$-representative $z\in {\cal X}$,
and a pattern $\pi^\varepsilon_k$ returned by~$PROC_k^\varepsilon(z)$, we are now going to show that the distance
$\|\tilde{Y}_{k\tau,z}^{\pi^\varepsilon_k}-y_f\|$ converges to~${\bf v}_k(y)$
as $\varepsilon\rightarrow 0$.
We first consider the ODE:
$\frac{dy}{dt}=f_u(y)$, and give an upper bound to the error between
the exact solution of the ODE and
its Euler approximation (see \cite{SNR17}).
\begin{definition}\label{def:delta}
Let $\mu$ be a given positive constant. Let us define, for all 
$u\in U$ and $t\in [0,\tau]$,
$\delta^u_{t,\mu}$ as follows:
$$\mbox{if } \lambda_u <0:\ \ 
\delta^u_{t,\mu}=\left(\mu^2 e^{\lambda_u t}+
 \frac{C_u^2}{\lambda_u^2}\left(t^2+\frac{2 t}{\lambda_u}+\frac{2}{\lambda_u^2}\left(1- e^{\lambda_u t} \right)\right)\right)^{\frac{1}{2}}$$
%
$$\mbox{if } \lambda_u = 0:\ \ 
\delta^u_{t,\mu}= \left( \mu^2 e^{t} + C_u^2 (- t^2 - 2t + 2 (e^t - 1)) \right)^{\frac{1}{2}}$$
%
$$\mbox{if } \lambda_u > 0:\ \ 
\delta^u_{t,\mu}=\left(\mu^2 e^{3\lambda_u t}+ 
\frac{C_u^2}{3\lambda_u^2}\left(-t^2-\frac{2t}{3\lambda_u}+\frac{2}{9\lambda_u^2}
\left(e^{3\lambda_u t}-1\right)\right)\right)^{\frac{1}{2}}$$
%
where $C_u$ and $\lambda_u$ are real constants specific to function $f_u$,
defined as follows:
$$C_u=\sup_{y\in S} L_u\|f_u(y)\|,$$
where $L_u$ denotes the Lipschitz constant for $f_u$, and
$\lambda_u$ is the OSL constant associated to $f_u$, i.e., the 
minimal constant such that, for all $y_1,y_2\in S$:
$$\langle f_u(y_1)-f_u(y_2), y_1-y_2\rangle \leq \lambda_u\|y_1-y_2\|^2,$$
where $\langle\cdot,\cdot\rangle$ denotes the scalar product of two vectors
of $S$.
\end{definition}

\begin{proposition}\label{prop:basic}\cite{SNR17}
Consider the solution $Y_{t,y_0}^u$ of $\frac{dy}{dt}=f_u(y)$ with
initial condition~$y_0$ of $\varepsilon$-representative $z_0$
(hence such that $\|y_0-z_0\|\leq\varepsilon$), 
and the approximate
solution $\tilde{Y}_{t,z_0}^u$ given by the explicit Euler scheme.
For all $t\in[0,\tau]$, we have:
$$\|Y_{t,y_0}^u-\tilde{Y}_{t,z_0}^u\|\leq \delta^u_{t,\varepsilon}.$$
\end{proposition}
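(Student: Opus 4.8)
The plan is to bound the quantity $e(t) := \|Y_{t,y_0}^u - \tilde Y_{t,z_0}^u\|$ by deriving a differential inequality for $e(t)^2$ and then integrating it explicitly; the three cases in Definition~\ref{def:delta} correspond exactly to the sign of $\lambda_u$ in that integration. First I would write $Y(t) := Y_{t,y_0}^u$ for the exact flow, so $\dot Y = f_u(Y)$, and note that the Euler image satisfies $\tilde Y(t) = z_0 + t\,f_u(z_0)$, hence $\dot{\tilde Y}(t) = f_u(z_0)$, a \emph{constant} vector. The error difference therefore obeys $\dot e_{\mathrm{vec}}(t) = f_u(Y(t)) - f_u(z_0)$ where $e_{\mathrm{vec}}(t) := Y(t) - \tilde Y(t)$.

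Next I would compute $\tfrac{d}{dt}\tfrac12 e(t)^2 = \langle \dot e_{\mathrm{vec}}(t), e_{\mathrm{vec}}(t)\rangle = \langle f_u(Y(t)) - f_u(z_0),\, Y(t) - \tilde Y(t)\rangle$ and split the argument by inserting the intermediate point $\tilde Y(t)$: writing $f_u(Y) - f_u(z_0) = \bigl(f_u(Y) - f_u(\tilde Y)\bigr) + \bigl(f_u(\tilde Y) - f_u(z_0)\bigr)$. The first bracket pairs against $Y - \tilde Y = e_{\mathrm{vec}}$ and is controlled by the OSL definition, giving $\langle f_u(Y) - f_u(\tilde Y), Y - \tilde Y\rangle \le \lambda_u\, e(t)^2$. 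The second bracket is controlled by the ordinary Lipschitz constant together with Cauchy--Schwarz: $\|f_u(\tilde Y) - f_u(z_0)\| \le L_u \|\tilde Y - z_0\| = L_u\, t\,\|f_u(z_0)\| \le C_u\, t$, using the definitions $C_u = \sup_y L_u\|f_u(y)\|$ and $\|\tilde Y - z_0\| = t\|f_u(z_0)\|$. Combining these yields the scalar differential inequality $\tfrac{d}{dt}\,e(t)^2 \le 2\lambda_u\, e(t)^2 + 2 C_u\, t\, e(t)$, or equivalently, away from zeros of $e$, $\tfrac{d}{dt}\,e(t) \le \lambda_u\, e(t) + C_u\, t$, with $e(0) = \|y_0 - z_0\| \le \varepsilon$.

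I would then apply Gr\"onwall's (comparison) lemma to the linear inequality $\dot e \le \lambda_u e + C_u t$ with $e(0) \le \varepsilon$. Solving the corresponding linear ODE $\dot w = \lambda_u w + C_u t$ explicitly by the integrating factor $e^{-\lambda_u t}$ produces, after integrating $\int_0^t s\, e^{-\lambda_u s}\, ds$ twice by parts, precisely the closed forms stated for $\delta^u_{t,\varepsilon}$; the $\lambda_u = 0$ case is the degenerate integral of $C_u t$ and the $\lambda_u > 0$ case arises from the sharper quadratic-form estimate needed to keep the bound valid (this is why that branch carries $e^{3\lambda_u t}$ rather than $e^{\lambda_u t}$, reflecting a $3\lambda_u$ effective rate in the squared error when the contraction is lost). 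The comparison step gives $e(t) \le \delta^u_{t,\varepsilon}$ for all $t \in [0,\tau]$.

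The main obstacle I anticipate is the rigorous justification of the differential inequality for $e(t)^2$ at instants where $e(t) = 0$, where $\tfrac{d}{dt}e$ may fail to exist; the clean way around this is to work with $e(t)^2$ throughout (which is everywhere differentiable) and invoke a differential-inequality comparison theorem for $e^2$ directly, only passing to $e = \sqrt{e^2}$ at the end, rather than dividing by $e(t)$. A secondary subtlety is matching the positive-$\lambda_u$ branch: one must verify that the particular algebraic manipulation used there (splitting $C_u t$ against $e$ versus bounding a cross term inside the squared norm) is the one that yields the stated $\delta^u_{t,\mu}$ with its factor $3\lambda_u$, so I would treat that case by keeping the estimate at the level of $e^2$ and applying Young's inequality to the cross term before integrating.
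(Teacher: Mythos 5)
Your overall strategy is the same as the source argument for this proposition: set $e(t)=\|Y^u_{t,y_0}-\tilde Y^u_{t,z_0}\|$, use that the Euler segment has constant derivative $f_u(z_0)$, insert the intermediate point $\tilde Y$ so that the OSL constant controls $\langle f_u(Y)-f_u(\tilde Y),\,Y-\tilde Y\rangle\leq \lambda_u e(t)^2$ while the Lipschitz constant controls $\|f_u(\tilde Y)-f_u(z_0)\|\leq L_u t\|f_u(z_0)\|\leq C_u t$, and conclude
$$\frac{d}{dt}\,e(t)^2\ \leq\ 2\lambda_u\,e(t)^2+2C_u\,t\,e(t).$$
Up to this point your proposal is correct and identical in substance to the cited proof.

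The gap is in the integration step. Solving $\dot w=\lambda_u w+C_u t$, $w(0)=\varepsilon$, as you propose, gives $w(t)=\varepsilon e^{\lambda_u t}+\frac{C_u}{\lambda_u^2}\left(e^{\lambda_u t}-1-\lambda_u t\right)$, which is \emph{not} the function $\delta^u_{t,\varepsilon}$ of Definition~\ref{def:delta}: the latter is the square root of an expression quadratic in $(\varepsilon,C_u)$, and no rearrangement turns the linear-ODE solution into it. Concluding the proposition from your bound would additionally require proving $w(t)\leq\delta^u_{t,\varepsilon}$, which you neither state nor prove. In fact all three branches of Definition~\ref{def:delta} arise by staying at the level of $e^2$ and absorbing the cross term with Young's inequality, $2C_u t\,e\leq \eta\,e^2+C_u^2t^2/\eta$, with $\eta=|\lambda_u|$, $\eta=1$, $\eta=\lambda_u$ in the three cases; this yields $\frac{d}{dt}e^2\leq(2\lambda_u+\eta)e^2+C_u^2t^2/\eta$, whose effective rates $2\lambda_u+\eta=\lambda_u,\,1,\,3\lambda_u$ are exactly the exponents $e^{\lambda_u t}$, $e^{t}$, $e^{3\lambda_u t}$ appearing in $\delta^u_{t,\mu}$, and integrating $\int_0^t e^{(2\lambda_u+\eta)(t-s)}s^2\,ds$ reproduces the stated closed forms verbatim. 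You recognize this mechanism only for $\lambda_u>0$ (your remark about the ``$3\lambda_u$ effective rate'') and in your fallback paragraph, but your claims that the $\lambda_u<0$ form follows from the linear inequality and that the $\lambda_u=0$ case ``is the degenerate integral of $C_u t$'' are both false: integrating $\dot e\leq C_u t$ gives $\varepsilon+C_ut^2/2$, whereas the $\lambda_u=0$ branch of Definition~\ref{def:delta} carries a factor $e^{t}$ that cannot arise that way. The fix is to apply your own fallback --- Young's inequality at the level of $e^2$, then the comparison lemma --- uniformly in all three cases; with that change the proof goes through and coincides with the source proof.
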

\begin{proposition}
Consider the system $\frac{dy}{dt}=f_u(y)$
with $f_u(y) := \sigma{\cal L}_hy+\sigma\varphi_h(t,u)+f(y)$.
For a diffusion coefficient $\sigma>0$ 
sufficiently large, the OSL constant~$\lambda_u$ associated
to $f_u$ is such that: $\lambda_u <0$.
\end{proposition}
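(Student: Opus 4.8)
The plan is to compute the quadratic form defining the OSL constant directly, exploiting the fact that the control term $\varphi_h(t,u)$ does not depend on $y$ and therefore cancels in the difference $f_u(y_1)-f_u(y_2)$. Setting $w := y_1 - y_2$, I would first observe that
$$f_u(y_1) - f_u(y_2) = \sigma\,\mathcal{L}_h w + \big(f(y_1)-f(y_2)\big),$$
so that the defining inner product splits additively as
$$\langle f_u(y_1)-f_u(y_2),\, w\rangle = \sigma\,\langle \mathcal{L}_h w,\, w\rangle + \langle f(y_1)-f(y_2),\, w\rangle.$$
It then suffices to bound the two terms separately and choose $\sigma$ large enough that the (strictly negative) diffusion contribution dominates the reaction contribution.

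For the diffusion term I would use that $\mathcal{L}_h$ is symmetric and negative definite. Its eigenvalues are explicitly known: since $h^2\mathcal{L}_h$ is the tridiagonal matrix $\mathrm{tridiag}(1,-2,1)$, they are $-\tfrac{4}{h^2}\sin^2\!\big(\tfrac{k\pi}{2(M+1)}\big)$ for $k=1,\dots,M$. The largest (least negative) one is $\lambda_{\max}(\mathcal{L}_h) = -\tfrac{4}{h^2}\sin^2\!\big(\tfrac{\pi}{2(M+1)}\big) < 0$. By the Rayleigh quotient inequality for a symmetric matrix, $\langle \mathcal{L}_h w, w\rangle \le \lambda_{\max}(\mathcal{L}_h)\,\|w\|^2$, whence $\sigma\,\langle \mathcal{L}_h w, w\rangle \le -\sigma\,\alpha\,\|w\|^2$ with $\alpha := \tfrac{4}{h^2}\sin^2\!\big(\tfrac{\pi}{2(M+1)}\big) > 0$. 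For the reaction term, $f$ is Lipschitz on the compact set $S=[0,1]^M$ with some constant $L_f$, so by Cauchy--Schwarz $\langle f(y_1)-f(y_2), w\rangle \le L_f\,\|w\|^2$ (one may equivalently substitute the OSL constant $\lambda_f$ of the reaction term, which can only improve the bound).

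Combining the two estimates gives
$$\langle f_u(y_1)-f_u(y_2),\, w\rangle \le \big(-\sigma\,\alpha + L_f\big)\,\|w\|^2,$$
and since this holds for all $y_1,y_2\in S$, by definition $\lambda_u \le -\sigma\,\alpha + L_f$. Taking $\sigma$ strictly above the threshold $L_f/\alpha = \tfrac{L_f\, h^2}{4\sin^2(\pi/2(M+1))}$ forces $\lambda_u < 0$, which is the desired conclusion.

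I do not anticipate a serious obstacle: the only nontrivial ingredient is the spectral information on $\mathcal{L}_h$, namely symmetry together with a strictly negative largest eigenvalue of order $1/h^2$, which is classical for the one-dimensional discrete Dirichlet Laplacian. The single point worth flagging is that $\alpha$ depends on $M$ (hence on $h$), so the required lower bound on $\sigma$ grows as the spatial mesh is refined; the statement is nevertheless valid for each fixed discretization, which is all that is claimed.
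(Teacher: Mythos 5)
Your proof is correct and follows essentially the same route as the paper's: write $f_u(y_1)-f_u(y_2)=\sigma\mathcal{L}_h w+\bigl(f(y_1)-f(y_2)\bigr)$ with $w=y_1-y_2$, bound the reaction contribution by a (one-sided) Lipschitz estimate, and let the negative definite discrete Laplacian dominate for $\sigma$ large. Your version is in fact more explicit than the paper's, which only invokes ``negativity of the quadratic form associated to $\mathcal{L}_h$'' and concludes for $\sigma$ sufficiently large, whereas you supply the uniform spectral gap $\langle \mathcal{L}_h w,w\rangle \le -\tfrac{4}{h^2}\sin^2\!\bigl(\tfrac{\pi}{2(M+1)}\bigr)\|w\|^2$ that this conclusion implicitly requires, together with the resulting explicit threshold on $\sigma$.
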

\begin{proof}
Consider the ODE:
$\frac{dy}{dt}=f_u(y)=\sigma{\cal L}_hy+\sigma\varphi_h(t,u)+f(y)$.
For all $y_1,y_2\in S$, we have: $\langle f(y_2)-f(y_1), y_2-y_1\rangle\leq \lambda_f \|y_2-y_1\|^2$,
where $\lambda_f$ is the OSL constant of $f$.
%
Hence:
\begin{align*}\langle f_u(y_2)-f_u(y_1),y_2-y_1\rangle&=\langle \sigma{\cal L}_h(y_2-y_1) + f(y_2)-f(y_1),y_2-y_1\rangle \\ &\leq
(y_2-y_1)^\top(\sigma{\cal L}_h+\lambda_f)(y_2-y_1).\end{align*}
Since 
$y^\top{\cal L}_hy<0$ for all $y\in S$
(negativity of the quadratic form associated to~${\cal L}_h$), 
we have:
$$\lambda_u\|y_1-y_2\|^2 
\leq (y_2-y_1)^T(\sigma{\cal L}_h+\lambda_f)(y_2-y_1) <0,$$
for $\sigma>0$ sufficiently large. Hence $\lambda_u<0$.
\hspace*{\fill} $\Box$
\end{proof}

\begin{lemma}\label{lemma:1}
Consider the system $\frac{dy}{dt}=f_u(y)$
where the OSL constant $\lambda_{u}$ associated to $f_u$ is negative,
and initial error $e_0:=\|y_0-z_0\|>0$. 
Let
$G_u:=\frac{\sqrt{3}e_0|\lambda_u|}{C_u}$.
Consider the (smallest) positive root
$$\alpha_u := 1+|\lambda_{u}|G_{u}/4-\sqrt{1+(\lambda_{u}G_{u}/4)^2}$$
of equation:
$-\frac{1}{2}|\lambda_{u}|  G_{u}
+(2+\frac{1}{2}|\lambda_{u}|G_{u})\alpha-\alpha^2 = 0.$

Suppose:
$\frac{|\lambda_u|G_u}{4}<1.$
Then we have $0<\alpha_u< 1$, and, for all $t\in[0,\tau]$
with $\tau\leq G_u(1-\alpha_u)$:
$$\delta_{e_0}^u(t) 
\leq e_0.$$ 
\end{lemma}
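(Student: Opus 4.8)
The plan is to treat the two assertions separately: first the elementary bound $0<\alpha_u<1$, then the decay inequality $\delta^u_{e_0}(t)\le e_0$, reducing the latter to the very quadratic that defines $\alpha_u$. For the first, I would work with the closed form $\alpha_u=1+b-\sqrt{1+b^2}$, where $b:=|\lambda_u|G_u/4>0$. Then $\alpha_u<1$ is equivalent to $b<\sqrt{1+b^2}$, i.e. $b^2<1+b^2$, always true; and $\alpha_u>0$ is equivalent to $1+b>\sqrt{1+b^2}$, i.e. $1+2b+b^2>1+b^2$, i.e. $b>0$, which holds since $e_0>0$ and $\lambda_u\neq0$. (This is where positivity of $e_0$ enters; the hypothesis $b<1$ keeps $\alpha_u$ away from the degenerate regime and keeps the argument $s$ below in the range where the Taylor envelopes are sharp.)

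For the decay bound, since $\lambda_u<0$ I write $\lambda_u=-|\lambda_u|$ and set $s:=|\lambda_u|t\ge0$. Squaring $\delta^u_{e_0}(t)\le e_0$ and inserting the first branch of Definition~\ref{def:delta}, everything collapses in terms of $g(s):=s^2-2s+2(1-e^{-s})$ and $h(s):=1-e^{-s}$: after cancelling the common term $e_0^2e^{-s}$ the target becomes
$$\frac{C_u^2}{\lambda_u^4}\,g(s)\le e_0^2\,h(s).$$
I would then establish the two elementary envelopes $0\le g(s)\le s^3/3$ and $h(s)\ge s-s^2/2$, valid for all $s\ge0$; each follows by differentiating the relevant difference three (resp. two) times and using $1-e^{-s}\ge0$, so the function is nonnegative together with its initial derivatives. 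Using these, it suffices to prove $\frac{C_u^2}{\lambda_u^4}\cdot\frac{s^3}{3}\le e_0^2\,(s-s^2/2)$.

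Now I would insert the defining identity $C_u^2/\lambda_u^2=3e_0^2/G_u^2$ (equivalent to the definition of $G_u$) and divide by $s>0$, rewriting $s=|\lambda_u|t$, to reduce the last inequality to
$$\frac{t^2}{G_u^2}+\frac{|\lambda_u|\,t}{2}-1\le 0.$$
Setting $t=G_u(1-\alpha)$ and $a:=|\lambda_u|G_u/2$, this is exactly $(1-\alpha)^2+a(1-\alpha)-1\le0$, i.e. $q(\alpha):=\alpha^2-(2+a)\alpha+a\le0$, the quadratic whose smaller root is $\alpha_u$. Since $q$ opens upward with $q(\alpha_u)=0$ and $q(1)=-1<0$, the value $1$ lies strictly between the two roots, so $q\le0$ on all of $[\alpha_u,1]$. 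Finally, for $t\in[0,\tau]$ with $\tau\le G_u(1-\alpha_u)$ the corresponding $\alpha=1-t/G_u$ ranges over $[\,1-\tau/G_u,\,1\,]\subseteq[\alpha_u,1]$, which closes the argument.

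The main obstacle is twofold: making the Taylor envelopes tight enough that the reduced polynomial inequality coincides \emph{exactly} with $q(\alpha)$ rather than merely implying a weaker bound (the cubic estimate $g(s)\le s^3/3$ is the sharp one here), and checking the sign condition $s-s^2/2\ge0$, i.e. $s\le2$, needed when dividing by $s$. The latter is not automatic from the envelopes; I would verify it at the endpoint, where $s_{\max}=2a(1-\alpha_u)=2a(\sqrt{1+a^2/4}-a/2)$, and squaring reduces $s_{\max}\le2$ to $a^2(1+a^2/4)\le(1+a^2/2)^2$, i.e. the trivial $0\le1$.
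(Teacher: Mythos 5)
Your proof is correct. It relies on the same two analytic ingredients as the paper's Appendix~1 --- the Taylor--Lagrange envelopes $1-e^{-s}\ge s-s^2/2$ and $s^2-2s+2(1-e^{-s})\le s^3/3$ (essentially the paper's second- and fourth-order remainder estimates for $e^{\lambda_u t}$, rewritten in the variable $s=|\lambda_u|t$) and the defining quadratic $q(\alpha)=\alpha^2-(2+a)\alpha+a$, with $a:=|\lambda_u|G_u/2$, whose smaller root is $\alpha_u$ --- but it organizes them differently, and the difference buys a genuine simplification. The paper substitutes the exact root identity $q(\alpha_u)=0$ into its chain of estimates, so it obtains $\delta^u_{e_0}(t^*)\le e_0$ only at the endpoint $t^*=G_u(1-\alpha_u)$, and must then run a second, separate argument (convexity of $t\mapsto(\delta^u_{e_0}(t))^2$: positive second derivative, negative derivative at $0$, hence a unique crossing $t^{**}$ of the level $e_0^2$, with $t^*\le t^{**}$) to propagate the bound back to all of $[0,t^*]$. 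You instead set $t=G_u(1-\alpha)$ and observe that the reduced inequality is precisely $q(\alpha)\le 0$, which holds on the whole interval $[\alpha_u,1]$ because $q$ opens upward, $\alpha_u$ is its smaller root, and $q(1)=-1<0$; this yields the bound uniformly for all $t\in[0,\tau]$ in a single pass and renders the convexity step unnecessary. Two minor remarks. First, your closing endpoint check that $|\lambda_u|G_u(1-\alpha_u)\le 2$ is redundant: dividing by $s>0$ is sign-free, so the chain through the envelopes never needs $s-s^2/2\ge 0$ in advance, and the inequality $t^2/G_u^2+|\lambda_u|t/2-1\le 0$ that you establish already forces $|\lambda_u|t\le 2$. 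Second, the hypothesis $|\lambda_u|G_u/4<1$ is in fact used nowhere, neither in your argument nor in the paper's: $0<\alpha_u<1$ and $q\le 0$ on $[\alpha_u,1]$ hold for every positive value of $|\lambda_u|G_u/4$, so your parenthetical explanation of its role is not accurate, although nothing in your proof depends on it.
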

\begin{proof}
See Appendix 1.
\end{proof}
\begin{remark}
In practical case studies $|\lambda_u|$ is often small, and the term  $(\lambda_{u}G_{u}/4)^2$ can be neglected, leading to $\alpha_u\approx |\lambda_u|G_u/4$
and $G_u(1-\alpha_u)\approx G_u(1-\frac{|\lambda_u|G_u}{4})\approx G_u$.
\end{remark}
\begin{remark}
It follows that, for $\tau\leq G_u(1-\alpha_u)$,
the Euler explicit scheme is {\em stable}, in the sense that initial errors are damped out. 
\end{remark}
\begin{remark}\label{rk:subsampling}
If $\tau > G_u(1-\alpha_u)$, we can make use of {\em subsampling}, i.e.,
decompose~$\tau$ into a sequence of elementary time steps $\Delta t$
with $\Delta t \leq G_u(1-\alpha_u)$ in order to be still able to apply
Lemma~\ref{lemma:1} (see Example~\ref{ex:1}).
Let us point out that Lemma \ref{lemma:1} (and the use of subsampling) allows to ensure set-based reachability with the use of procedure $PROC_k^\varepsilon$. Indeed, in this setting, the explicit Euler scheme leads to decreasing errors, and thus, point based computations performed with the center of a cell can be applied to the entire cell. 
\end{remark}

We suppose henceforth that the system $\frac{dy}{dt}=f_u(y)$
satisfies:
$$(H):\ \ \ \lambda_u<0,\ \frac{|\lambda_u|G_u}{4}<1\  \mbox{ and }\  \tau \leq G_u(1-\alpha_u),\ \mbox{ for all } u\in U.$$
%
From Proposition~\ref{prop:basic} and Lemma~\ref{lemma:1}, it easily follows:
\begin{theorem}\label{th:1}
Consider a system $\frac{dy}{dt}=f_u(y)$ satisfying $(H)$,
and a point $y\in S$ of $\varepsilon$-representative
$z\in {\cal X}$.
We have:
$$\|Y_{t,y}^\pi-\tilde{Y}_{t,z}^\pi\|\leq \varepsilon,\ \ \ 
\mbox{ for all }\ \pi\in U^k \mbox{ and } t\in[0,k\tau].$$
\end{theorem}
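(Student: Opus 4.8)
The plan is to prove the bound by induction on the number of switching periods, propagating the error at the period boundaries $0,\tau,2\tau,\dots,k\tau$ and exploiting the concatenation structure of both the exact flow $Y^\pi$ and the Euler polygon $\tilde Y^\pi$. Fix a pattern $\pi=u_k\cdots u_1\in U^k$ and set, for $j=0,\dots,k$, the exact boundary states $y^{(j)}:=Y^\pi_{j\tau,y}$ and the Euler boundary states $z^{(j)}:=\tilde Y^\pi_{j\tau,z}$, with $y^{(0)}=y$ and $z^{(0)}=z$. I would prove by induction on $j$ the statement that $\|y^{(j)}-z^{(j)}\|\le\varepsilon$ and that, on the whole period $[j\tau,(j+1)\tau]$, one has $\|Y^\pi_{t,y}-\tilde Y^\pi_{t,z}\|\le\varepsilon$.

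For the base case $j=0$, the bound $\|y^{(0)}-z^{(0)}\|=\|y-z\|\le\varepsilon$ holds because $z$ is by assumption the $\varepsilon$-representative of $y$. For the inductive step, suppose $\|y^{(j)}-z^{(j)}\|\le\varepsilon$. On $[j\tau,(j+1)\tau]$ the active mode is $u:=u_{k-j}$, and by the concatenation definitions the restriction of $Y^\pi$ to this period is the exact solution of $\frac{dy}{dt}=f_u(y)$ issued from $y^{(j)}$, while the restriction of $\tilde Y^\pi$ is the Euler approximation issued from $z^{(j)}$. Since $\|y^{(j)}-z^{(j)}\|\le\varepsilon$, Proposition~\ref{prop:basic} applies with initial error $\varepsilon$ and yields, for all $s\in[0,\tau]$,
$$\|Y^u_{s,y^{(j)}}-\tilde Y^u_{s,z^{(j)}}\|\le\delta^u_{s,\varepsilon}.$$
Hypothesis $(H)$ guarantees $\lambda_u<0$, $|\lambda_u|G_u/4<1$ and $\tau\le G_u(1-\alpha_u)$, so Lemma~\ref{lemma:1} (taken with $e_0=\varepsilon$, consistently with $(H)$) gives $\delta^u_{s,\varepsilon}\le\varepsilon$ for every $s\in[0,\tau]$. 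Combining the two inequalities shows that the error stays below $\varepsilon$ throughout the period and, at $s=\tau$, that $\|y^{(j+1)}-z^{(j+1)}\|\le\varepsilon$, which is exactly the hypothesis needed to restart the induction on the next period. Ranging over $j=0,\dots,k-1$ then covers all $t\in[0,k\tau]$ and establishes the claim for arbitrary $\pi\in U^k$.

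The step I expect to be the crux is the junction between consecutive periods: the exact trajectory continues from the true state $y^{(j)}$ while the Euler polygon restarts its linearisation from the approximate state $z^{(j)}$, so the argument only closes if the accumulated error at each boundary is again at most $\varepsilon$. This is precisely the content of the stability property in Lemma~\ref{lemma:1}: because $\delta^u_{\tau,\varepsilon}\le\varepsilon$, the period map does not amplify the error, and the induction proceeds with the same constant $\varepsilon$ rather than with a geometrically growing bound, which is what replaces the classical $e^{L_fT}$ factor by a uniform estimate. Two minor points also need care: Proposition~\ref{prop:basic} must be invoked in the form requiring only $\|y^{(j)}-z^{(j)}\|\le\varepsilon$ (the intermediate states $z^{(j)}$ for $j\ge 1$ are Euler images, not grid centers, but the proposition's proof uses only the distance bound), and when the true boundary error is strictly smaller than $\varepsilon$ the bound $\delta^u_{s,\varepsilon}$ remains valid as a possibly loose over-approximation, so no refinement of the initial error is required.
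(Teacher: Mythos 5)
Your proof is correct and follows exactly the route the paper intends: the paper presents Theorem~\ref{th:1} as an immediate consequence of Proposition~\ref{prop:basic} and Lemma~\ref{lemma:1} (``it easily follows''), and your period-by-period induction, using Lemma~\ref{lemma:1} with $e_0=\varepsilon$ to show the boundary error is not amplified across each switching period, is precisely the omitted argument. Your two side remarks --- that the intermediate Euler states $z^{(j)}$ need not be grid points since Proposition~\ref{prop:basic} only uses the distance bound, and that $\delta^u_{t,\mu}$ may be taken with the loose bound $\mu=\varepsilon$ when the actual boundary error is smaller --- correctly handle the only points the paper leaves implicit.
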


\begin{proposition}\label{prop:inf}
Let $z\in{\cal X}$ and
$\pi_k^\varepsilon$ be the pattern of $U^k$ returned by $PROC_k^\varepsilon(z)$.
For all $\pi\in U^k$, we have:
$$\|\tilde{Y}_{k\tau,z}^{\pi_k^\varepsilon}-y_f\|\leq 
\|\tilde{Y}_{k\tau,z}^{\pi}-y_f\| +  2k\varepsilon.$$
\end{proposition}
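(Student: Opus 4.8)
The plan is to express the discrete value ${\bf v}_k^\varepsilon(z)$ as the distance to $y_f$ of the endpoint of a \emph{rounded} (grid) trajectory, to bound the gap between such a grid trajectory and the corresponding pure Euler trajectory by $k\varepsilon$, and then to combine two such bounds through the triangle inequality and the optimality of ${\bf v}_k^\varepsilon$.

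First I would unfold the dynamic-programming recursion. For a pattern $\pi = u_k\cdots u_1\in U^k$, define the grid trajectory $z_0 := z$ and $z_j := next^{u_{k-j+1}}(z_{j-1})$ for $j=1,\dots,k$, and write $z_k^\pi$ for its endpoint. By induction on $k$ from the definition of ${\bf v}_k^\varepsilon$, one gets ${\bf v}_k^\varepsilon(z) = \min_{\pi\in U^k}\|z_k^\pi - y_f\|$, the minimum being attained by $\pi_k^\varepsilon$ by construction; hence ${\bf v}_k^\varepsilon(z)=\|z_k^{\pi_k^\varepsilon}-y_f\|$ and ${\bf v}_k^\varepsilon(z)\le \|z_k^\pi-y_f\|$ for every $\pi$.

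Next, for a fixed $\pi$ I would compare $z_k^\pi$ with the pure Euler endpoint $\tilde{Y}_{k\tau,z}^\pi$, obtained by composing the one-step maps \emph{without} rounding ($w_0 := z$, $w_j := \tilde{Y}_{\tau,w_{j-1}}^{u_{k-j+1}}$, so $w_k=\tilde{Y}_{k\tau,z}^\pi$). Setting $e_j := \|w_j-z_j\|$ and $u := u_{k-j+1}$, the triangle inequality gives $e_j \le \|\tilde{Y}_{\tau,w_{j-1}}^{u} - \tilde{Y}_{\tau,z_{j-1}}^{u}\| + \|\tilde{Y}_{\tau,z_{j-1}}^{u} - z_j\|$. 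The second term is at most $\varepsilon$, since $z_j$ is the $\varepsilon$-representative of $\tilde{Y}_{\tau,z_{j-1}}^{u}$. The first term I would bound by $e_{j-1}$ using the non-expansiveness of the one-step Euler map, which is exactly the stability property guaranteed under hypothesis $(H)$ by the negativity of the OSL constant $\lambda_u$ (cf. Lemma~\ref{lemma:1} and the stability remark: initial errors are not amplified). Since $e_0 = 0$, this recurrence yields $e_k \le k\varepsilon$, that is $\|\tilde{Y}_{k\tau,z}^\pi - z_k^\pi\|\le k\varepsilon$ for every $\pi$.

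Finally I would assemble the two halves. Applying the last bound to $\pi_k^\varepsilon$ gives $\|\tilde{Y}_{k\tau,z}^{\pi_k^\varepsilon}-y_f\| \le \|z_k^{\pi_k^\varepsilon}-y_f\| + k\varepsilon = {\bf v}_k^\varepsilon(z) + k\varepsilon$; applying it to an arbitrary $\pi$ together with optimality of ${\bf v}_k^\varepsilon$ gives ${\bf v}_k^\varepsilon(z)\le \|z_k^\pi-y_f\| \le \|\tilde{Y}_{k\tau,z}^\pi-y_f\| + k\varepsilon$. Chaining the two inequalities produces $\|\tilde{Y}_{k\tau,z}^{\pi_k^\varepsilon}-y_f\| \le \|\tilde{Y}_{k\tau,z}^\pi-y_f\| + 2k\varepsilon$, as claimed. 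The main obstacle is the per-step non-amplification estimate $\|\tilde{Y}_{\tau,w}^u - \tilde{Y}_{\tau,z}^u\|\le \|w-z\|$: this is where OSL negativity is genuinely used, since it is what keeps the $k$ successive rounding errors from compounding multiplicatively (which would replace the linear $2k\varepsilon$ by an exponentially growing bound). Everything else is a routine unfolding of the recursion combined with triangle inequalities.
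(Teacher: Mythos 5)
Your proof is correct and relies on the same two ingredients as the paper's: the per-step rounding error $\varepsilon$ coming from the $\varepsilon$-representative, and the non-expansiveness of the Euler map between nearby starting points under hypothesis $(H)$. The organization, however, is genuinely different and arguably cleaner. The paper proves the inequality by a single induction on $k$ applied directly to $\|\tilde{Y}_{k\tau,z}^{\pi_k^\varepsilon}-y_f\|$, interleaving at each step the rounding error, the non-expansiveness, and the optimality of the argmin mode; this is compact but hard to audit (it even contains a notational slip, writing $next^{u_k}(z_k)$ where $next^{u_k}(z)$, the representative of $z_k=\tilde{Y}^{u_k}_{\tau,z}$, is meant). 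You instead factor the argument through the discrete value function as a pivot: (i) ${\bf v}_k^\varepsilon(z)$ equals the distance to $y_f$ of the endpoint of the rounded grid trajectory and is attained by $\pi_k^\varepsilon$; (ii) any grid trajectory and its unrounded Euler counterpart drift apart by at most $k\varepsilon$ (the recurrence $e_j\leq e_{j-1}+\varepsilon$ with $e_0=0$); (iii) two applications of (ii) on either side of ${\bf v}_k^\varepsilon(z)$ give the bound as $k\varepsilon+k\varepsilon$. This makes the bookkeeping behind the constant $2k$ transparent and isolates the single place where optimality of the DP choice is used. One caveat applies equally to both proofs: the non-expansiveness $\|\tilde{Y}^u_{\tau,w}-\tilde{Y}^u_{\tau,z}\|\leq\|w-z\|$ is not literally what Lemma \ref{lemma:1} provides, since that lemma bounds the exact-versus-Euler distance ($\delta^u_{e_0}(t)\leq e_0$), not the Euler-versus-Euler distance; a self-contained justification would expand $\|w-z+\Delta t\,(f_u(w)-f_u(z))\|^2\leq\left(1+2\lambda_u\Delta t+L_u^2\Delta t^2\right)\|w-z\|^2$ and impose the substep condition $\Delta t\leq 2|\lambda_u|/L_u^2$. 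Since the paper's own proof makes exactly the same leap (its replacement of $z_k$ by its representative at cost $\varepsilon$ over the remaining horizon is this very property), this is not a gap of your proposal relative to the paper, but it is worth flagging if you want a fully rigorous argument.
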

\begin{proof}
W.l.o.g., let us suppose that $y_f$ is the origin $O$.
Let us prove by induction on $k$: 
$$\|\tilde{Y}_{k\tau,z}^{\pi_k^\varepsilon}\|\leq 
\|\tilde{Y}_{k\tau,z}^{\pi}\| +2k\varepsilon.$$
Let $\pi_k^\varepsilon := u_k\cdots u_1$. 
The base case $k=1$ is easy. For $k\geq 2$,
we have:

\ \ $\|\tilde{Y}_{k\tau,z}^{\pi_k^\varepsilon}\|=\|\tilde{Y}_{(k-1)\tau,z_k}^{u_{k-1}\cdots u_1}\|$ with $z_k=\tilde{Y}_{\tau,z}^{u_k}$
with $u_k=argmin_{u\in U}\{{\bf v}_{k-1}^\varepsilon(next^u(z))\}$

$\leq \|\tilde{Y}_{(k-1)\tau,next^{u_k}(z_k)}^{u_{k-1}\cdots u_1}\|+\varepsilon$

$\leq \|\tilde{Y}_{(k-1)\tau,next^{u_k}(z_k)}^{\pi'}\| +(2k-1)\varepsilon$\ \  
for all $\pi'\in U^{k-1}$ by induction hypothesis,

$\leq \|\tilde{Y}_{(k-1)\tau,z'}^{\pi'}\| +2k\varepsilon$\ \  
for all $\pi'\in U^{k-1}$ and all $z'\in \{next^u(z)\ | u\in U\}$

$\leq \|\tilde{Y}_{\tau,z}^{\pi}\| +2k\varepsilon$\ \  
for all $\pi\in U^k$.

\hspace*{\fill} $\Box$
\end{proof}
\begin{theorem}\label{th:2}
Let $y\in S$ be a point
 of $\varepsilon$-representative $z\in {\cal X}$. Let
$\pi_k^\varepsilon$ be the pattern returned by $PROC_k^\varepsilon(z)$,
and $\pi^* := \mbox{argmin}_{\pi\in U_k} \|Y^\pi_{k\tau,y}-y_{f}\|$.
The discretization error 
$E_\varepsilon(T) :=|\|\tilde{Y}^{\pi_k^\varepsilon}_{k\tau,z}-y_f\|-{\bf v}_k(y)|$, with ${\bf v}_k(y) := \|Y_{k\tau,y}^{\pi^*}-y_f\|$ and $T=k\tau$,
satisfies:
$$E_\varepsilon(T)\leq (2k+1)\varepsilon.$$
It follows that $\|\tilde{Y}_{k\tau,z}^{\pi_k^\varepsilon}-y_f\|$ converges to 
${\bf v}_k(y)$ as $\varepsilon\rightarrow 0$.
\end{theorem}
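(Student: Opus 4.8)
The plan is to bound the signed quantity $\|\tilde{Y}^{\pi_k^\varepsilon}_{k\tau,z}-y_f\| - {\bf v}_k(y)$ from both sides, using two facts already established: Theorem~\ref{th:1}, which guarantees $\|Y^\pi_{t,y}-\tilde{Y}^\pi_{t,z}\|\leq\varepsilon$ \emph{uniformly over all patterns} $\pi\in U^k$; and Proposition~\ref{prop:inf}, which guarantees that the Euler cost of the discrete-optimal pattern $\pi_k^\varepsilon$ is within $2k\varepsilon$ of the Euler cost of any competitor. Throughout, I would combine these with the triangle inequality and with the defining minimality of ${\bf v}_k(y)=\|Y^{\pi^*}_{k\tau,y}-y_f\|$ over patterns.

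For the upper direction I would start from $\pi_k^\varepsilon$ and first apply Proposition~\ref{prop:inf} with the competitor $\pi=\pi^*$, obtaining $\|\tilde{Y}^{\pi_k^\varepsilon}_{k\tau,z}-y_f\|\leq\|\tilde{Y}^{\pi^*}_{k\tau,z}-y_f\|+2k\varepsilon$. I would then pass from the Euler trajectory of $\pi^*$ to its exact trajectory via the triangle inequality and Theorem~\ref{th:1}, namely $\|\tilde{Y}^{\pi^*}_{k\tau,z}-y_f\|\leq\|Y^{\pi^*}_{k\tau,y}-y_f\|+\|\tilde{Y}^{\pi^*}_{k\tau,z}-Y^{\pi^*}_{k\tau,y}\|\leq{\bf v}_k(y)+\varepsilon$. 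Chaining these yields $\|\tilde{Y}^{\pi_k^\varepsilon}_{k\tau,z}-y_f\|\leq{\bf v}_k(y)+(2k+1)\varepsilon$.

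For the lower direction I would use that $\pi_k^\varepsilon$ is itself a feasible pattern, so by minimality ${\bf v}_k(y)\leq\|Y^{\pi_k^\varepsilon}_{k\tau,y}-y_f\|$; another application of the triangle inequality and Theorem~\ref{th:1} gives $\|Y^{\pi_k^\varepsilon}_{k\tau,y}-y_f\|\leq\|\tilde{Y}^{\pi_k^\varepsilon}_{k\tau,z}-y_f\|+\varepsilon$, hence ${\bf v}_k(y)\leq\|\tilde{Y}^{\pi_k^\varepsilon}_{k\tau,z}-y_f\|+\varepsilon$. Combining the two directions gives $|\,\|\tilde{Y}^{\pi_k^\varepsilon}_{k\tau,z}-y_f\|-{\bf v}_k(y)\,|\leq(2k+1)\varepsilon$, i.e.\ $E_\varepsilon(T)\leq(2k+1)\varepsilon$; letting $\varepsilon\to 0$ with $k$ (hence $T$) fixed yields the convergence. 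I do not expect a genuine obstacle here: all the analytic content---the OSL-negativity and subsampling that make the per-step Euler error contract, packaged in Theorem~\ref{th:1}---is already established upstream, so the only care needed is the bookkeeping that charges the $2k\varepsilon$ penalty from Proposition~\ref{prop:inf} only in the upper direction (the lower direction is in fact tighter, costing a single $\varepsilon$), which nonetheless leaves the symmetric bound $(2k+1)\varepsilon$.
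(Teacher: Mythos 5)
Your proposal is correct and follows essentially the same route as the paper's own proof: the upper bound chains Proposition~\ref{prop:inf} (with competitor $\pi^*$) with Theorem~\ref{th:1} to charge $(2k+1)\varepsilon$, and the lower bound uses minimality of ${\bf v}_k(y)$ at the feasible pattern $\pi_k^\varepsilon$ plus Theorem~\ref{th:1} to charge a single $\varepsilon$. The only cosmetic difference is that the paper states both inequalities for all $\pi\in U^k$ and then instantiates (taking the min over $\pi$ in the upper direction), whereas you instantiate at $\pi^*$ and $\pi_k^\varepsilon$ directly; the bookkeeping is identical.
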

\begin{proof}
W.l.o.g., let us suppose that $y_f$ is the origin $O$.
For all $\pi\in U^k$, we have by 
Proposition \ref{prop:inf} and Theorem \ref{th:1}:
$$\|\tilde{Y}_{k\tau,z}^{\pi_k^\varepsilon}\|\leq \|\tilde{Y}_{k\tau,z}^\pi\|+2k\varepsilon\leq \|Y_{k\tau,y}^\pi\|+(2k+1)\varepsilon.$$
Hence
$$\|\tilde{Y}_{k\tau,z}^{\pi_k^\varepsilon}\|\leq \min_{\pi\in U^k}
\|Y_{k\tau,y}^\pi\|+(2k+1)\varepsilon=\|Y_{k\tau,y}^{\pi^*}\|+(2k+1)\varepsilon.$$
On the other hand, for all $\pi\in U^k$, it follows from Theorem \ref{th:1}:
$$\|Y_{k\tau,y}^{\pi^*}\|\leq \|Y_{k\tau,y}^\pi\|\leq \|\tilde{Y}_{k\tau,z}^\pi\|+\varepsilon.$$
Hence:
$$\|Y_{k\tau,y}^{\pi^*}\|\leq \|\tilde{Y}_{k\tau,z}^{\pi_k^{\varepsilon}}\|+\varepsilon.$$
Therefore we have: $|\|\tilde{Y}_{k\tau,z}^{\pi_k^{\varepsilon}}\|-\|Y_{k\tau,y}^{\pi^*}\||\leq (2k+1)\varepsilon$.
\hspace*{\fill} $\Box$
\end{proof}

\begin{remark}\label{rk:pattern}
The error bound $E_\varepsilon(T)$ 
is thus {\em linear} in $k=T/\tau$. 
In order to decrease~$k$, one can apply 
consecutively $p\geq 2$ modes {\em in a row} (without intermediate
$\varepsilon$-approximation); this is equivalent to divide $k$ by $p$, 
at the price of considering $m^p$ ``extended'' modes instead of
just $m$ modes. 
(see Example~\ref{ex:1}, Figure~\ref{fig:Pouchol52}).
An alternative for decreasing $k$ is 
to increase $\tau$ (which may require in turn to
decrease~$\Delta t$ for preserving assumption
$\Delta t \leq G_u(1-\alpha_u)$, see Remark~\ref{rk:subsampling}).
%
\end{remark}
\begin{example} \label{ex:1}
Consider the 1D reaction-diffusion system with Dirichlet boundary
condition (see \cite{Pouchol18}, bistable case): 
\begin{align*}
& \frac{\partial y(t,x)}{\partial t} =\sigma \frac{\partial^2 y(t,x)}{\partial x^2} + f(y(t,x)),\ \ \ t\in[0,T],\ x\in [0,L]
\\
& y(t,0) = u_0, \quad y(t,L) = u_L, 
\\
& y(0,x) = y_0(x),\ \ \ x\in [0,L]
\end{align*}
with $\sigma=1, L=4$ and $f(y) = y(1-y)(y-\theta)$ with $\theta = 0.3$.
The control switching period is $\tau = 0.1$. 
The values of the boundary control  $u=(u_0, u_L)$ are in
$$U=\{(0,0),(0.2,0.2),(0.4,0.4),(0.6,0.6),(0.8.0.8),(1,1)\}.\footnote{Note that, in \cite{Pouchol18}, the values of the boundary control are in
the full interval $[0,1]$, not in a finite set $U$ as here. In \cite{Pouchol18}, they focus, not on the bounding of
computation errors during integration as here, but on a formal  
proof that 
the objective state
$y_f=\theta$
($0<\theta<1$) is reachable in finite time iff $L<L^*$ for some threshold value~$L^*$.}$$
We discretize the domain $\Omega=[0,L]$ 
of the system with $M_1=5$ discrete points, using a finite difference scheme. 
Our program returns an OSL constant
$\lambda_u = -0.322$ for all $u\in U$.
Constant 
$C_u  $ varies between $10.33$ and $11.85$ depending on the values of $u$.

We then discretize each interval component
of the space $S=[0,1]^{M_1}$ of values of $y$
into 15 points with spacing 
$\eta=1/15\approx 0.066$. The grid~${\cal X}$ is of the form
$\{0,\eta, 2\eta,\dots, 15\eta\}^{M_1}$, and
the initial error $e_0$ equal to $\varepsilon=\sqrt{M_1}\eta/2$.
This leads to $G_u$ varying between $0.00155$ and $0.00178$ depending 
on the value of $u\in U$. One checks: $\frac{|\lambda_u|G_u}{4}<1$
for all $u\in U$.
The time step upper bound required by Theorem \ref{th:1} for
ensuring numeric stability is $0.00155$. 
Since the switching period is $\tau = 0.1$, 
we perform  {\em subsampling}
(see, e.g., \cite{SNR17}) by decomposing every time step $[i\tau,(i+1)\tau)$
($1\leq i\leq k-1$)
into a sequence of elementary Euler steps of length 
$\Delta t = \tau / 100< 0.00155$.
This ensures that the system satisfies $(H)$, hence, by Theorem~\ref{th:1},
the explicit Euler scheme is stable and error 
$\|Y^\pi_{t,y_0}-\tilde{Y}_{t,z_0}^\pi\|$ never exceeds $\varepsilon$.  

For objective with $y_f=(0.3,0.3,0.3,0.3,0.3)$
 and horizon time $T=k\tau=2$ (i.e., $k=20$),
our program\footnote{The program, called ``OSLator'' \cite{oslator}, is implemented in Octave. It is composed of 10 functions and a main script totalling 600 lines of code. The computations are realised in a virtual machine running Ubuntu 18.06 LTS, having access to one core of a 2.3GHz Intel Core i5, associated to 3.5 GB of RAM memory.} returns an approximate optimal controller in $2$ minutes.  
Let $z_0$ be the $\varepsilon$-representative of
$y_0= 0.8 x/L + 0.1 (1 - x/L)$.
Let $\pi_k^\varepsilon$ be the pattern output by $PROC_k^\varepsilon(z_0)$.
A simulation of $z(t) :=\tilde{Y}_{t,z_0}^{\pi_k^\varepsilon}$
is given in Figure~\ref{fig:Pouchol5}
with $T=2$, $\tau=0.1$ ($k=20$),
$\Delta t=\frac{\tau}{100}$. We have
$\| z(T)-yf \|\approx 0.276$.
The simulation presents some similarity with
simulations displayed in \cite{Pouchol18}
(see, e.g., lower part of Figure 6), with a phase
control $u_0=u_L>\theta$ (here, $u_0=u_L=0.4$) alternating with a phase
control $u_0=u_L<\theta$ (here, $u_0=u_L=0.2$).
The discretization error $E_\varepsilon(T)$
is smaller than $(2k+1)\varepsilon=41\sqrt{5}/30< 3.1$.

\begin{figure}[t]
  \centering
    \includegraphics[scale=0.4]{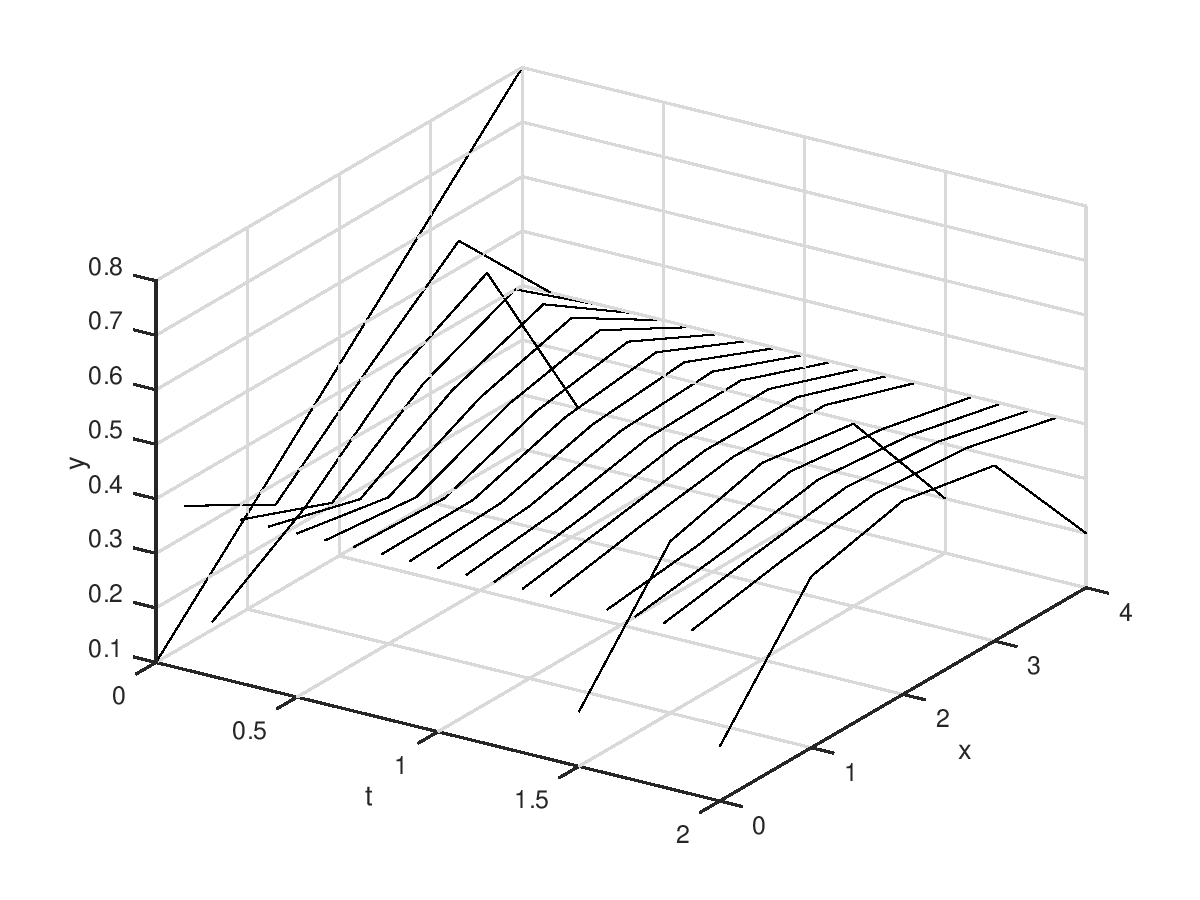}
  \caption{Simulation of the system of Example~\ref{ex:1} discretized
with $M_1=5$ points, for initial condition 
$y_0= 0.8 x/L + 0.1 (1 - x/L) $, objective
$y_f=0.3$ and horizon time $T=2$ ($\tau=0.1$, $\Delta t=\frac{\tau}{100}$).}
  \label{fig:Pouchol5}
\end{figure}
%

Let us now proceed with extended modes of length $p=2$ and 
$p=4$, as explained
in Remark~\ref{rk:pattern}.
For $p=2$ (i.e., $k=10$),
the control is synthesized in 7mn of CPU time.
The controller simulation is given in 
the left part of Figure~\ref{fig:Pouchol52};
we have:
$\| z(T)-y_f \|\approx 0.445$ 
with $E_\varepsilon(T) < 1.57$.
For $p=4$ (i.e., $k=5$), the computation of the control requires
8h of CPU time.
The corresponding simulation 
is given in the right part of Figure~\ref{fig:Pouchol52};
we now have:
$\|z(T)-y_f \|\approx 0.164$
with $E_\varepsilon(T)< 0.82$.
%

\end{example}

\subsection{Model reduction}\label{ss:MOR}
Let us consider the system ${\cal S}_2$
on space $S_{h_2}=[0,1]^{M_2}$ (with $M_2$ even).
The differential equation
can be written under the form:
$$\frac{dy_2}{dt}=\sigma{\cal L}_{h_2}y_2+\varphi_{h_2}(u)+{f}(y_2).$$
where ${\cal L}_{h_2}$
corresponds to the $(M_2\times M_2)$ Laplacian matrix,
and $h_2=\frac{L}{M_2+1}$.

Let us consider the ``reduced'' system~${\cal S}_1$ defined 
on $S_{h_1}=[0,1]^{M_1}$ with $M_1= M_2/2$, defined by:
$$\frac{dy_1}{dt}=\sigma{\cal L}_{h_1}y_1+\varphi_{h_1}(u)+f({y}_1),$$
%
where ${\cal L}_{h_1}$ is the $(M_1\times M_1)$ Laplacian matrix
and $h_1=\frac{L}{M_1+1}$.

With $M_1 = M_2/2$, we have $h_2 = \frac{L}{2M_1 +1}$ ($ =\frac{h_1(M_1+1)}{2M_1+1}$).
Let us consider the $(M_1\times M_2)$ {\em reduction matrix}:
\[
\Pi :=\frac{1}{\sqrt{2}}
  \begin{bmatrix}
    1 & 1 & 0 & \cdots & 0 & 0\\
    0 & 0 & 1 & 1 &\cdots & 0\\
    \ & \ & \cdots &\ &\ &\ \\
    0 & 0 & \cdots & 0 & 1 & 1
  \end{bmatrix}
\]
Note that $\Pi\Pi^\top={\cal I}_{M}$. 
%
Let us consider a point $w_0\in S_{h_2}$,
and let $z_0=\Pi w_0\in S_{h_1}$.
%
%
%
%
%

\begin{theorem}
\label{th:3}
Consider the system ${\cal S}_2$ and a point $w_0\in S_{h_2}$,
and let $z_0=\Pi w_0\in S_{h_1}$.
Let $Y_{w_0}^{h_2}$ and $Y_{z_0}^{h_1}$
be the solutions of ${\cal S}_2$ and ${\cal S}_1$ with 
initial conditions $w_0\in S_{h_2}$
and $z_0\in S_{h_1}$ respectively.
We have:
$$\forall t\geq 0\ \ \ \|\Pi Y_{w_0}^{h_2}(t)-Y_{z_0}^{h_1}(t)\|\leq \frac{K_2\sigma}{|\lambda_{h_1}|},$$
where 
$$K_2 := \sup_{w\in S_{h_2}}\|(\Pi  {\cal L}_{h_2}  -{\cal L}_{h_1}\Pi )w\|,$$
and ${\cal L}_{h_2}$ (resp. ${\cal L}_{h_1}$)
is the Laplacian matrix of size $M_2\times M_2$ (resp. $M_1\times M_1$).
\end{theorem}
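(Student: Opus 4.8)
The plan is to control the reduced error $e(t) := \Pi Y_{w_0}^{h_2}(t) - Y_{z_0}^{h_1}(t)$ by turning its dynamics into a scalar differential inequality and then exploiting the negativity of the OSL constant $\lambda_{h_1}$ of the reduced field. Write $W := Y_{w_0}^{h_2}$, $Z := Y_{z_0}^{h_1}$, and let $F_1^u(y) := \sigma\mathcal{L}_{h_1}y + \varphi_{h_1}(u) + f(y)$ be the right-hand side of $\mathcal{S}_1$, whose OSL constant is $\lambda_{h_1}<0$. The crucial initial observation is that the error starts at zero: since $z_0 = \Pi w_0$, we have $e(0) = \Pi W(0) - Z(0) = \Pi w_0 - z_0 = 0$. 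This is what will ultimately suppress the transient term and leave only the steady-state bound $K_2\sigma/|\lambda_{h_1}|$.

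Next I would compute $\dot e$. Since $\frac{d}{dt}\Pi W = \Pi(\sigma\mathcal{L}_{h_2}W + \varphi_{h_2}(u) + f(W))$ and $\dot Z = F_1^u(Z)$, I add and subtract $F_1^u(\Pi W)$ to obtain the decomposition $\dot e = \big(F_1^u(\Pi W) - F_1^u(Z)\big) + d(t)$, where the \emph{defect} is $d(t) := \Pi(\sigma\mathcal{L}_{h_2}W + \varphi_{h_2}(u) + f(W)) - F_1^u(\Pi W)$. Expanding $F_1^u(\Pi W)$ and regrouping gives $d(t) = \sigma(\Pi\mathcal{L}_{h_2} - \mathcal{L}_{h_1}\Pi)W + (\Pi\varphi_{h_2}(u) - \varphi_{h_1}(u)) + (\Pi f(W) - f(\Pi W))$, whose first summand is exactly the commutator appearing in the definition of $K_2$, so that $\|\sigma(\Pi\mathcal{L}_{h_2} - \mathcal{L}_{h_1}\Pi)W\| \le K_2\sigma$ uniformly.

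The key step, and the one I expect to be the main obstacle, is to show that the remaining contributions to $d(t)$ — the boundary-control mismatch $\Pi\varphi_{h_2}(u) - \varphi_{h_1}(u)$ and the nonlinear reaction mismatch $\Pi f(W) - f(\Pi W)$ — do not spoil the estimate, so that $\|d(t)\| \le K_2\sigma$ holds uniformly in $t$ and $u$. The cleanest route is to verify, using the explicit averaging structure of $\Pi$ together with the identity $\Pi\Pi^\top = \mathcal{I}_M$, that these two mismatches vanish (or are to be absorbed into the commutator term); this is precisely where the particular form of the reduction matrix must be exploited, and it is the delicate part of the argument. Granting this, I apply the OSL inequality to the first bracket, $\langle F_1^u(\Pi W) - F_1^u(Z), e\rangle \le \lambda_{h_1}\|e\|^2$, and Cauchy--Schwarz to the defect, $\langle d(t), e\rangle \le \|d(t)\|\,\|e\| \le K_2\sigma\,\|e\|$.

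Finally I would pass from $\frac{1}{2}\frac{d}{dt}\|e\|^2 = \langle \dot e, e\rangle \le \lambda_{h_1}\|e\|^2 + K_2\sigma\,\|e\|$ to the scalar inequality $\frac{d}{dt}\|e\| \le \lambda_{h_1}\|e\| + K_2\sigma$, valid wherever $\|e\|>0$, the set $\|e\|=0$ being handled by continuity. A standard comparison (Gr\"onwall) argument for $\dot v \le \lambda_{h_1} v + K_2\sigma$ with $v(0) = \|e(0)\| = 0$ and $\lambda_{h_1}<0$ then yields $\|e(t)\| \le \frac{K_2\sigma}{|\lambda_{h_1}|}\big(1 - e^{\lambda_{h_1}t}\big) \le \frac{K_2\sigma}{|\lambda_{h_1}|}$ for all $t\ge 0$, which is the claimed bound.
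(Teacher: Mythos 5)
Your proposal follows essentially the same route as the paper's own proof: the same error variable $e(t)=\Pi Y^{h_2}_{w_0}(t)-Y^{h_1}_{z_0}(t)$ with $e(0)=0$, the same splitting of $\dot e$ into an increment $F_{h_1}(\Pi W)-F_{h_1}(Z)$ of the reduced vector field plus a defect containing the commutator $\sigma(\Pi{\cal L}_{h_2}-{\cal L}_{h_1}\Pi)W$, the same OSL bound $\langle F_{h_1}(\Pi W)-F_{h_1}(Z),e\rangle\le\lambda_{h_1}\|e\|^2$ and Cauchy--Schwarz on the defect, and a Gr\"onwall-type conclusion exploiting $\lambda_{h_1}<0$. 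The only mechanical difference is the last step: you divide by $\|e\|$ and invoke a scalar comparison lemma, whereas the paper avoids the division by Young's inequality, $K_2\sigma\|e\|\le\tfrac{1}{2}K_2\sigma(\alpha\|e\|^2+1/\alpha)$ with $\alpha=-\lambda_{h_1}/(K_2\sigma)$, and then integrates $\|e\|^2$; both variants give the same uniform bound $K_2\sigma/|\lambda_{h_1}|$.

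The step you single out as the main obstacle --- showing that the remaining defect terms $\Pi\varphi_{h_2}(u)-\varphi_{h_1}(u)$ and $\Pi f(W)-f(\Pi W)$ contribute nothing --- is exactly the step the paper passes over in silence: its proof opens by asserting $\frac{d\Pi y_2}{dt}=\sigma\Pi{\cal L}_{h_2}y_2+\varphi_{h_1}(u)+f(\Pi y_2)$, which is precisely the claim that both mismatches vanish. They do not, for the given $\Pi$: componentwise, $(\Pi f(W))_i=\frac{1}{\sqrt2}\bigl(f(W_{2i-1})+f(W_{2i})\bigr)$ while $f\bigl((\Pi W)_i\bigr)=f\bigl(\frac{1}{\sqrt2}(W_{2i-1}+W_{2i})\bigr)$, and these differ for nonlinear $f$ (e.g.\ the cubic reaction term of Example~\ref{ex:1}); likewise $\Pi\varphi_{h_2}(u)$ carries the weight $1/(\sqrt2\,h_2^2)$ on its first and last entries whereas $\varphi_{h_1}(u)$ carries $1/h_1^2$, and $\sqrt2\,h_2^2\neq h_1^2$ since $h_1\approx 2h_2$. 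So the verification you defer with ``granting this'' would in fact fail as stated; your attempt and the paper's proof share the same gap, yours having the merit of naming it. The honest repair is not to prove the mismatches vanish but to absorb them into the defect constant: replace $K_2\sigma$ by
\begin{equation*}
K_2' := \sup_{w\in S_{h_2},\,u\in U}\bigl\|\sigma(\Pi{\cal L}_{h_2}-{\cal L}_{h_1}\Pi)w+\Pi\varphi_{h_2}(u)-\varphi_{h_1}(u)+\Pi f(w)-f(\Pi w)\bigr\|,
\end{equation*}
which is finite because $S_{h_2}$ is compact, $U$ is finite and $f$ is continuous. With this constant your argument (and the paper's) goes through verbatim and yields $\|e(t)\|\le K_2'/|\lambda_{h_1}|$ for all $t\ge 0$; the price is that the bound in the theorem is no longer simply proportional to $\sigma$.
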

\begin{proof}
Let us consider the system ${\cal S}_2$:
$$\frac{dy_2}{dt}=\sigma{\cal L}_{h_2}y_2+\varphi_{h_2}(u)+f(y_2).$$
%
By application of the projection matrix $\Pi $, we get:
$$\frac{d\Pi  y_2}{dt}=\sigma\Pi  {\cal L}_{h_2}y_2+\varphi_{{h_1}}(u)+f(\Pi  y_2).$$
By substracting pairwise with the sides of ${\cal S}_1$,
we have:
$$\frac{d \Pi  y_2}{dt}-\frac{dy_1}{dt}=\sigma (\Pi  {\cal L}_{h_2} y_2
-{\cal L}_{{h_1}}y_1) +f(\Pi  y_2)-f(y_1)$$
$$=F_{h_1}(\Pi  y_2)-F_{h_1}(y_1)+\sigma(\Pi {\cal L}_{h_2}-{\cal L}_{h_1}\Pi ) y_2,$$
where $F_{h_1}(y)=\sigma{\cal L}_{h_1}(y)+f(y)$ for $y\in S_{h_1}$.
On the other hand, we have: 

$\frac{1}{2}\frac{d}{dt}(\|\Pi  y_2-y_1\|^2) =\langle \frac{d}{dt}(\Pi  y_2-y_1),\Pi  y_2-y_1\rangle$

$=\langle F_{h_1}(\Pi  y_2)-F_{h_1}(y_1)+\sigma(\Pi {\cal L}_{h_2}-{\cal L}_{h_1}\Pi ) y_2,
\Pi  y_2-y_1\rangle$

$=\langle F_{h_1}(\Pi  y_2)-F_{h_1}(y_1), \Pi  y_2-y_1\rangle$

\hspace*{8mm} $+\sigma \langle (\Pi {\cal L}_{h_2}-{\cal L}_{h_1}\Pi ) y_2,\Pi  y_2-y_1\rangle$

$\leq \lambda_{h_1} \|\Pi  y_2-y_1\|^2
+\sigma \langle (\Pi {\cal L}_{h_2}-{\cal L}_{h_1}\Pi ) y_2,\Pi  y_2-y_1\rangle$

$\leq \lambda_{h_1} \|\Pi  y_2-y_1\|^2
+K_2\sigma \|\Pi  y_2-y_1\|$
$$\mbox{with}\ \ \ \ K_2 := \sup_{w\in S_{h_2}}\|(\Pi  {\cal L}_{h_2}  -{\cal L}_{{h_1}}\Pi )w\|$$
\hspace*{3mm} $\leq \lambda_{h_1} \|\Pi  y_2-y_1\|^2
+K_2\sigma \frac{1}{2}(\alpha\|\Pi  y_2-y_1\|^2+\frac{1}{\alpha})$,\\
for all $\alpha>0$. Choosing $\alpha>0$ such that $K_2\sigma\alpha=-\lambda_{h_1}$,
i.e.: $\alpha=-\frac{\lambda_{h_1}}{K_2\sigma}$, we have:
$$\frac{1}{2} \frac{d}{dt}(\|\Pi  y_2-y_1\|^2)
\leq \frac{\lambda_{h_1}}{2}\|\Pi  y_2-y_1\|^2-\frac{(K_2\sigma)^2}{2\lambda_{h_1}}.$$
Since $y_2(0)=w_0$ and $y_1(0)=z_0$, we get by integration:
$$\|\Pi  y_2(t) -y_1(t)\|^2 \leq \frac{(K_2\sigma)^2}{\lambda_{h_1}^2}(1-e^{\lambda_{h_1}t})\leq \frac{(K_2\sigma)^2}{\lambda_{h_1}^2}.$$
Hence:
$\|\Pi  Y^{h_2}_{w_0}(t)-Y_{z_0}^{h_1}(t)\|\leq \frac{K_2\sigma}{|\lambda_{{h_1}}|}$
for all $t\geq 0$.
\hfill $\Box$
\end{proof}
This proposition expresses that 
the reduction error 
is bounded by  constant $\frac{K_2\sigma}{|\lambda_{{h_1}}|}$  when the same control modes are applied to both systems.\footnote{By comparison, in \cite{AllaS19}, the 
error term originating from
the POD model reduction is {\em exponential} in $T$
(see $C_1(T,|x|)$ in the proof of Theorem 5.1).}

%
Let $y_2^0\in S_2$ and $y_2^f\in S_2$
be an initial and  objective point respectively.
Let $y_1^0:= \Pi y_2^0\in S_1$ and $y_1^f:= \Pi y_2^f\in S_1$ denote their projections.
Suppose that $\pi^\varepsilon$ is  the pattern returned 
by $PROC^\varepsilon_k(y_1^0)$ for the reduced system
${\cal S}_1$.
%
Then, from Theorem~\ref{th:3}, it follows that,
when the {\em same} control $\pi^\varepsilon$ 
is applied to the original system ${\cal S}_2$ with 
$y_2(0)=y_2^0\in S_2$, it makes the projection $\Pi y_2^{\pi^\varepsilon}(t)\in S_1$
reach a {\em neighborhood} of $y_1^f$ at time $t=T$.
Formally, we have:
$$\|P y_2^{\pi^\varepsilon}(T)- y_1^f\|\leq \|y_1^{\pi^\varepsilon}(T)-y_1^f\|+ \frac{K_2\sigma}{|\lambda_{{h_1}}|}.$$
\begin{example}\label{ex:2}
Let us take
the system defined in Example \ref{ex:1} as reduced system ${\cal S}_1$ 
($M_1=5$), and let us take as ``full-size'' system ${\cal S}_2$
the system corresponding to
$M_2=10$. Since the size of the grid  ${\cal X}_2$ associated to ${\cal S}_2$
is exponential in $M_2$, the
size ${\cal X}_2$ is multiplied by 
$(1/\eta)^{M_2-M_1}=15^5\approx 7.6\cdot 10^{5}$ w.r.t. the size
of the grid ${\cal X}_1$ associated to ${\cal S}_1$. The complexity
for synthesizing directly 
the optimal control of ${\cal S}_2$ thus becomes intractable.
On the other hand, if we apply to ${\cal S}_2$
the optimal strategy $\pi^\varepsilon\in U^k$ found
for ${\cal S}_1$ in Example~\ref{ex:1}, we obtain 
a simulation depicted in  Figure~\ref{fig:Pouchol10.0} for extended mode
of length 1, which is the counterpart of 
Figure~\ref{fig:Pouchol5}
with $M_2=10$ (instead of $M_1=5$), and has a very similar form. 
Likewise, if we apply to ${\cal S}_2$
the optimal strategy $\pi^\varepsilon\in U^k$ found
for ${\cal S}_1$ in Example~\ref{ex:1}, we obtain 
a simulation depicted in  Figure~\ref{fig:Pouchol10} for extended modes
of length 2 and 4, which is the counterpart of
Figure~\ref{fig:Pouchol52}, and very similar to it.
As seen above, we have:
$$\| \Pi y_2^{\pi^\varepsilon}(T)- y_1^f\|\leq \|y_1^{\pi^\varepsilon}(T)-y_1^f\|+ \frac{K_2\sigma}{|\lambda_{{h_1}}|},$$ 
where $y_1^f=(0.3,0.3,0.3,0.3,0.3)$, and
the reduction error is bounded by $\frac{K_2\sigma}{|\lambda_{{h_1}}|} = 17.9\ \sigma.$

%
The subexpression $\|y_1^{\pi^\varepsilon}(T)-y_1^f\|$ can be computed 
{\em a posteriori} by simulation: see Table~1 of Appendix~2,
with $\sigma=1$, $\sigma=0.5$.
The value of 
$\|y_2^{\pi^\varepsilon}(T)-y_2^f\|$ for~${\cal S}_2$ is also given
in Table 1 for comparison.

The upper bound $\|y_1^{\pi^\varepsilon}(T)-y_1^f\|+ \frac{K_2\sigma}{|\lambda_{{h_1}}|}$
of the distance
$\|Py_2^{\pi^\varepsilon}(T)-y_1^f\|$
is very conservative, due to {\em a priori} error bound  $\frac{K_2\sigma}{|\lambda_{{h_1}}|}$.
On can obtain {\em a posteriori}
a much sharper estimate of $\|Py^{\pi^\varepsilon}_2(T)-y_1^f\|$ 
by simulation: see Table~2, Appendix~2.
\begin{figure}[t]
  \includegraphics[scale=0.32]{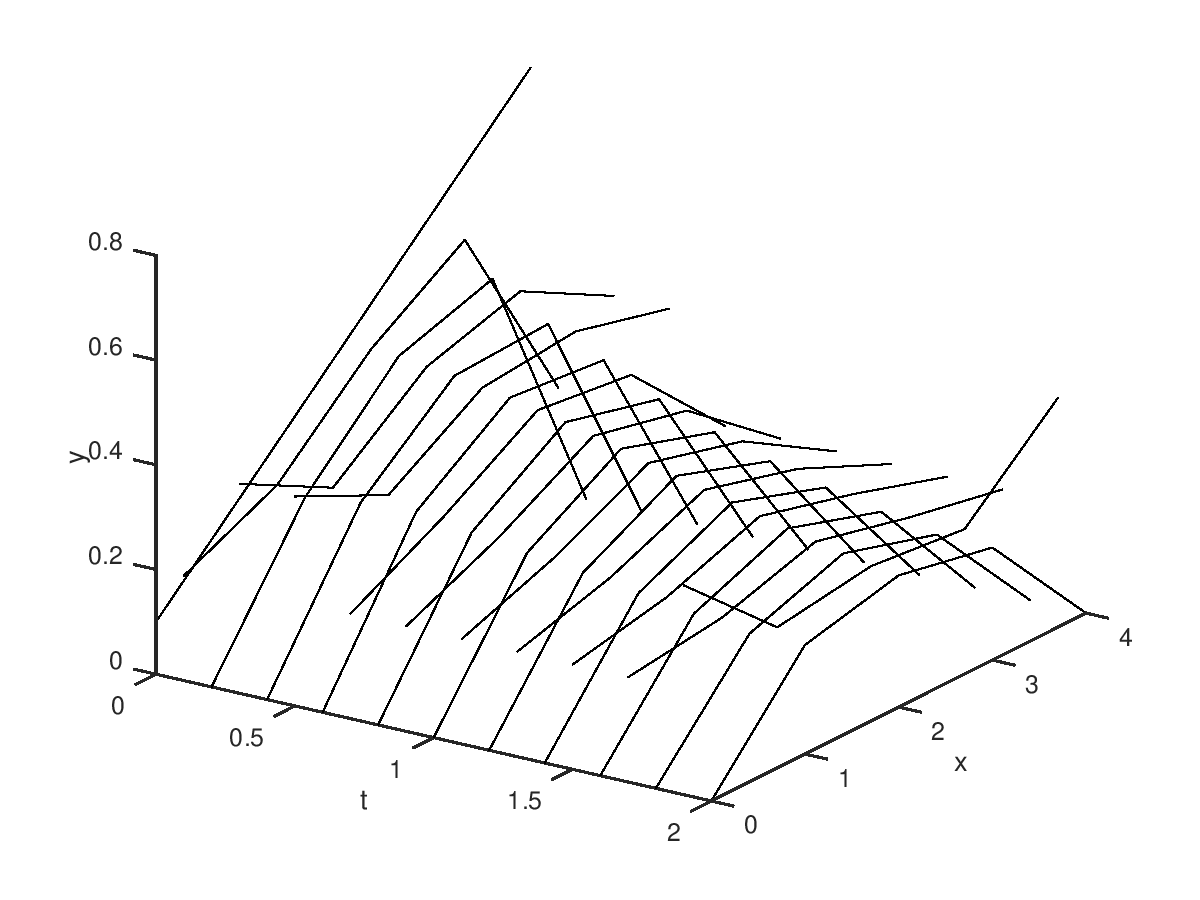}\includegraphics[scale=0.32]{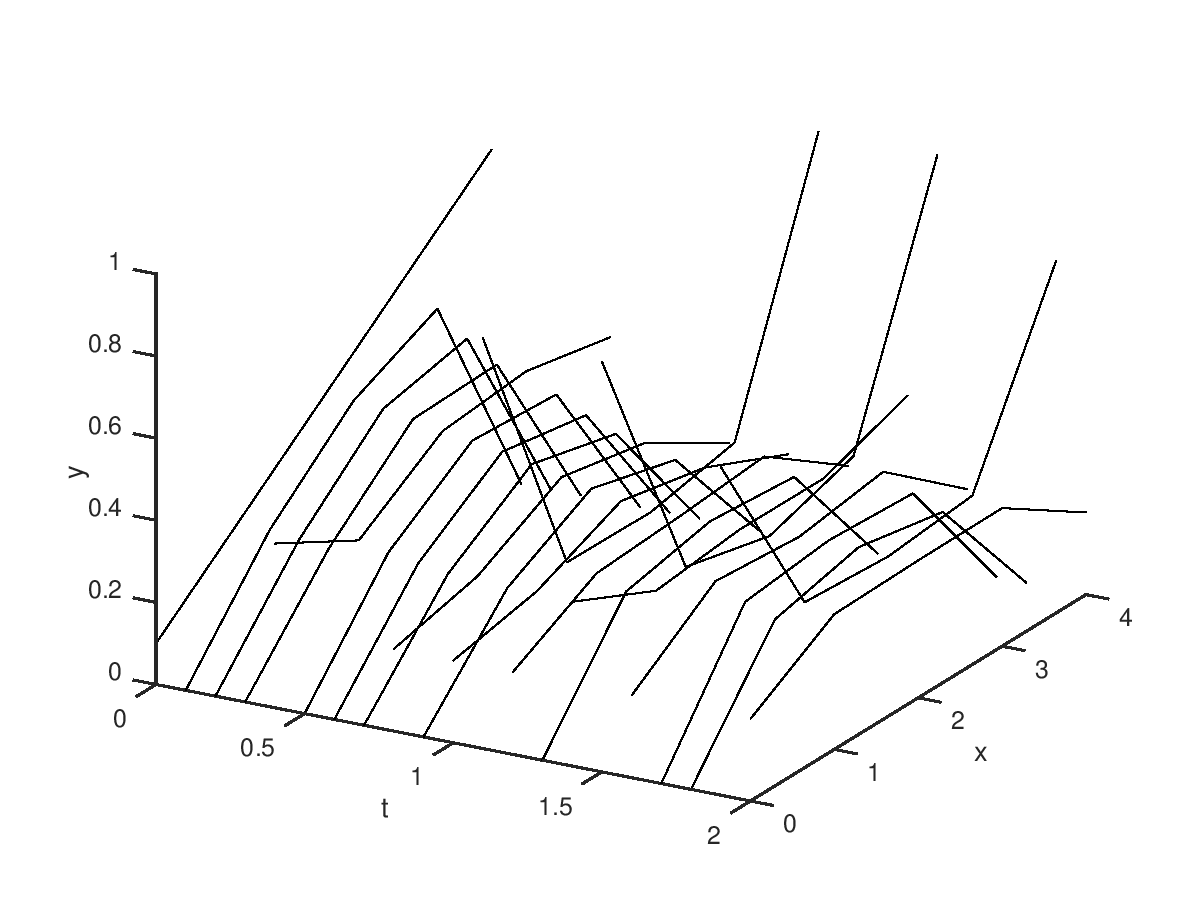}
  \caption{Simulation of the system of Example~\ref{ex:1} 
discretized with $M_1=5$ points,
with extended modes of length 2 (left) and extended modes
of length 4 (right).
}
  \label{fig:Pouchol52}
\end{figure}

\begin{figure}[t]
  \centering
    \includegraphics[scale=0.4]{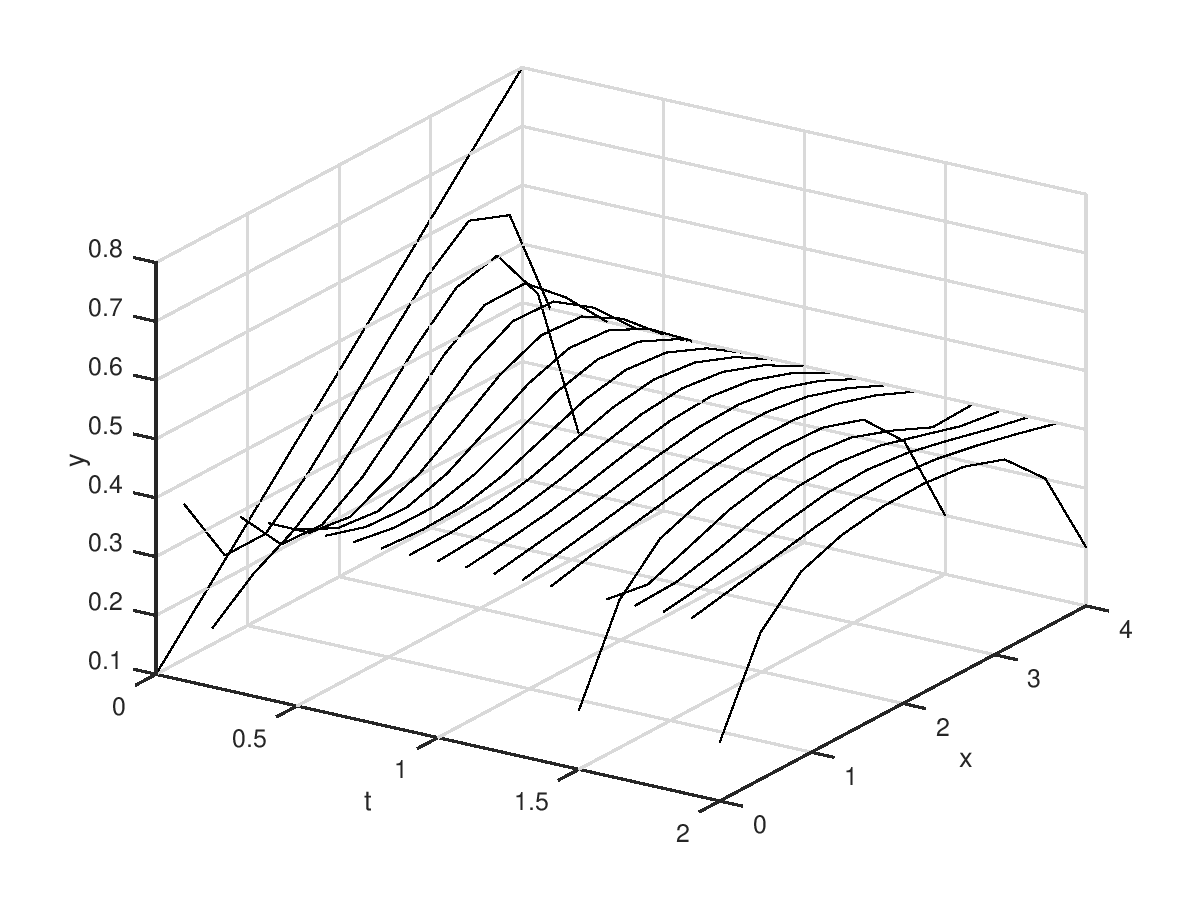}
  \caption{Simulation of the  system of Ex. \ref{ex:1},
discretized with $M_2=10$ points,
with extended mode of length 1.
}
  \label{fig:Pouchol10.0}
\end{figure}

\begin{figure}[t]
  \centering
    \includegraphics[scale=0.3]{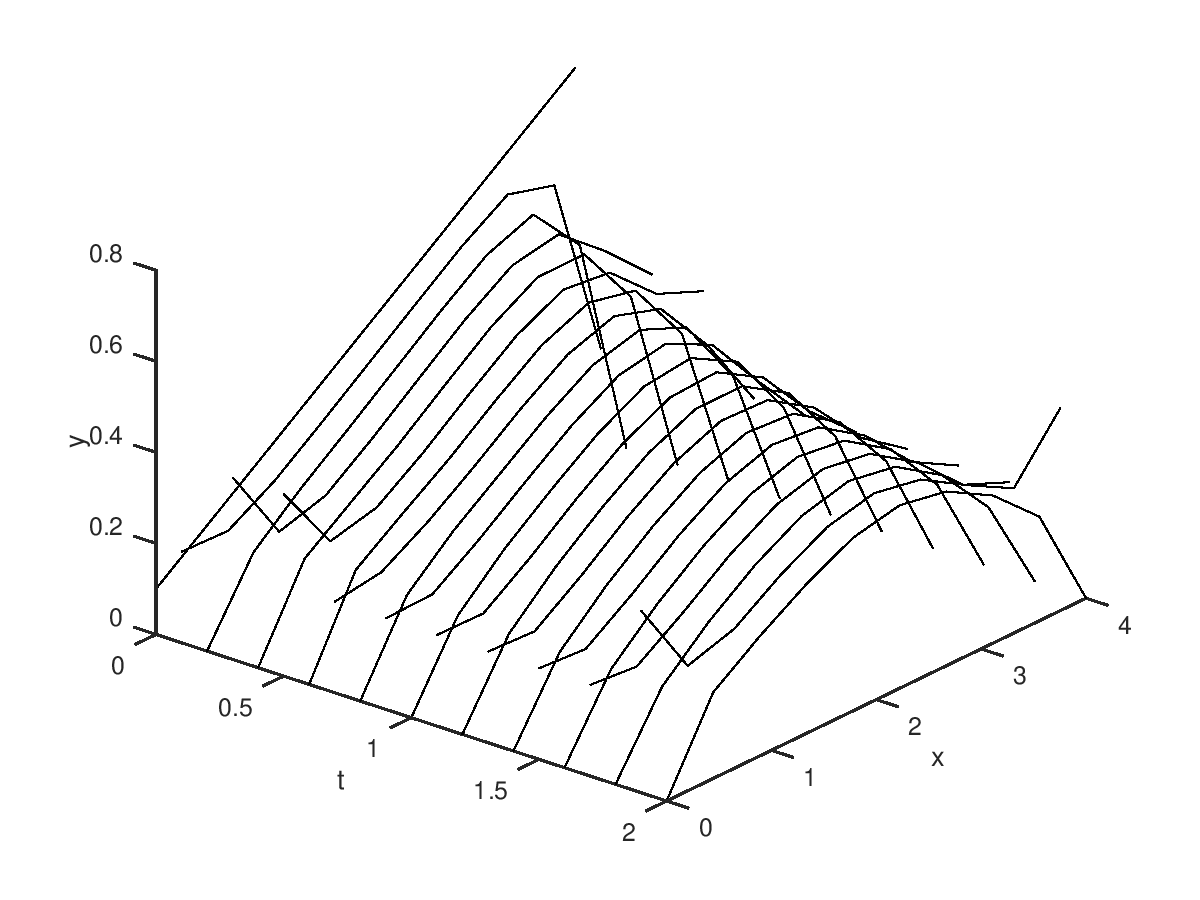}\includegraphics[scale=0.3]{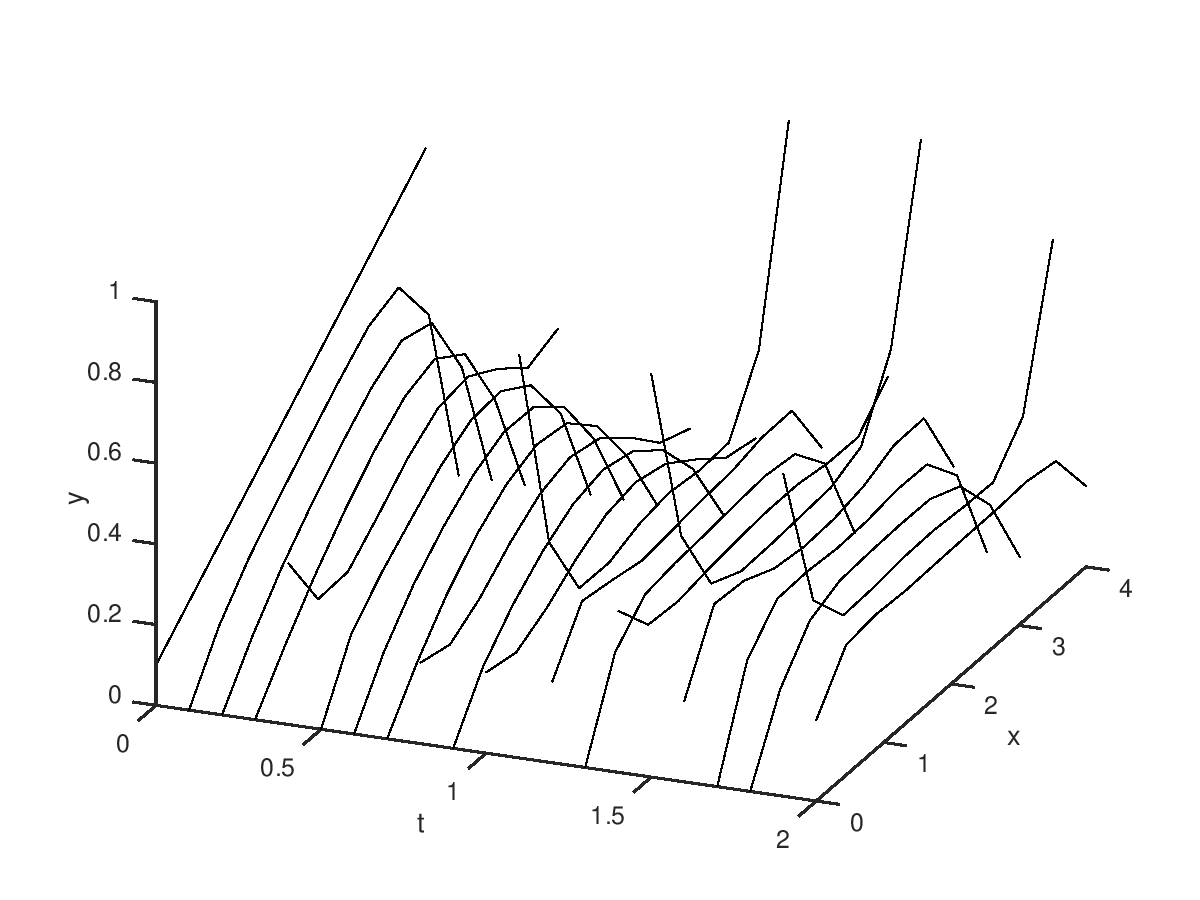}
  \caption{Simulation of the  system of Ex. \ref{ex:1},
discretized with $M_2=10$ points,
with extended modes of length 2 (left) and 
extended modes of length 4 (right).
}
  \label{fig:Pouchol10}
\end{figure}

\end{example}

\section{Final Remarks}\label{sec:final}

Using the notion of OSL constant, we have shown how to use
the finite difference and explicit Euler methods in order
to solve finite horizon control problems
for reaction-diffusion equations. Furthermore, we have 
quantified the deviation of this control with the optimal strategy,
and proved that the error upper bound is {\em linear} in the 
horizon length.
We have applied the method
to a 1D bi-stable reaction-diffusion equation, and
have found experimental results similar to those of
\cite{Pouchol18}. We have also given a simple and specific 
model reduction
method which allows to apply the method to equations of larger size.
In future work, we plan to apply the method to 2D reaction-diffusion equations
(e.g., Test 1 of \cite{AllaS19}).









%

\bibliographystyle{plain}
\bibliography{euler}
\newpage
\section*{Appendix 1: Proof of Lemma \ref{lemma:1}}
\begin{proof}
It is easy to check that $0< \alpha_u< 1$ when $\frac{|\lambda_u|G_u}{4}<1$. 

Let $t^* := G_u(1-\alpha_u)$.
Let us first prove $\delta_{e_0}(t)\leq e_0$ for $t=t^*$.
We have:

$$-\frac{1}{2}|\lambda_{u}|  G_{u}
+(2+\frac{1}{2}|\lambda_{u}|G_{u})\alpha_u-\alpha_u^2 = 0.$$

Hence:

$$\frac{1}{2G_{u}(1-\alpha_u)}\lambda_{u}  G_{u}^2(1-\alpha_u)^2
+2\alpha_u-\alpha_u^2
= 0,$$

i.e.

$$\frac{1}{2t^*}\lambda_{u}  (t^*)^2
+2\alpha_u-\alpha_u^2
= 0.$$

We have:
$-\frac{1}{4G_u^2t^*}\lambda_{u}  (t^*)^4e^{\lambda_ut^*}\geq 0$. 
It follows:

$$\frac{1}{2t^*}\lambda_{u}  (t^*)^2
+2\alpha_u-\alpha_u^2-\frac{1}{4G_u^2t^*}\lambda_{u}  (t^*)^4
e^{\lambda_ut^*}\geq 0.$$

Hence:

$$1+\frac{1}{2t^*}\lambda_{u}  (t^*)^2
-\frac{1}{G_{u}^2}((t^*)^2+\frac{1}{4t^*}\lambda_{u}  (t^*)^4e^{\lambda_ut^*})
\geq 0.$$


By multiplying by $t^*$:

$$(t^*+\frac{1}{2}\lambda_{u}  (t^*)^2)
-\frac{1}{G_{u}^2}((t^*)^3+\frac{1}{4}\lambda_{u}  (t^*)^4e^{\lambda_ut^*})
\geq 0.$$

Since $G=\sqrt{3}|\lambda_u| e_0/C_u$:

$$e_0^2(t^*+\frac{1}{2}\lambda_{u}  (t^*)^2)
+\frac{C_u^2}{\lambda_u^2}(-\frac{1}{3}(t^*)^3-\frac{1}{12}\lambda_{u}  (t^*)^4e^{\lambda_ut^*})
\geq 0.$$

By multiplying by $\lambda_u$:

$$e_0^2(\lambda_ut^*+\frac{1}{2}\lambda_{u}^2  (t^*)^2)
+\frac{C_u^2}{\lambda_u^2}(-\frac{1}{3}\lambda_u(t^*)^3-\frac{1}{12}\lambda_{u}^2  (t^*)^4e^{\lambda_ut^*})
\leq 0.$$

Note that, in the above formula, the subexpression 
$\lambda_ut^*+\frac{1}{2}\lambda_{u}^2  (t^*)^2$ is such that:

$$\lambda_ut^*+\frac{1}{2}\lambda_{u}^2  (t^*)^2\geq e^{\lambda_u t^*}-1$$

since $e^{\lambda_u t^*}-1=\lambda_ut^*+\frac{1}{2}\lambda_{u}^2  (t^*)^2e^{\lambda\theta}\leq \lambda_ut^*+\frac{1}{2}\lambda_{u}^2  (t^*)^2$.\\

On the other hand, the subexpression
$-\frac{1}{3}\lambda_u(t^*)^3-\frac{1}{12}\lambda_{u}^2  (t^*)^4e^{\lambda_ut^*}$
is such that:

$$-\frac{1}{3}\lambda_u(t^*)^3-\frac{1}{12}\lambda_{u}^2  (t^*)^4e^{\lambda_ut^*}\geq 
\frac{2 t^*}{\lambda_{u}}+(t^*)^2+\frac{2}{\lambda_{u}^2}(1-e^{\lambda_{u}  t^*})$$

since

$\frac{2 t^*}{\lambda_{u}}+(t^*)^2+\frac{2}{\lambda_{u}^2}(1-e^{\lambda_{u}  t^*})$

$=\frac{2 t^*}{\lambda_{u}}+(t^*)^2+\frac{2}{\lambda_{u}^2}(-\lambda_u t^*-\frac{1}{2}\lambda_u^2 (t^*)^2-\frac{1}{6}\lambda_u^3(t^*)^3-\frac{1}{24}\lambda_u^4(t^*)^4 e^{\lambda_u\theta}$

$=\frac{2}{\lambda_u^2}(-\frac{1}{6}\lambda_u^3(t^*)^3-\frac{1}{24}\lambda_u^4(t^*)^4 e^{\lambda_u\theta})$ for some $0\leq \theta\leq t^*$

$=-\frac{1}{3}\lambda_u(t^*)^3-\frac{1}{12}\lambda_u^2(t^*)^4 e^{\lambda_u\theta}$

$\leq -\frac{1}{3}\lambda_u(t^*)^3-\frac{1}{12}\lambda_u^2(t^*)^4e^{\lambda_ut^*}.$\\

It follows:

$$e_0^2(e^{\lambda_{u}  t^*}-1)+\frac{C_{u}^2}{\lambda_{u}^2}(\frac{2 t^*}{\lambda_{u}}+(t^*)^2+\frac{2}{\lambda_{u}^2}(1-e^{\lambda_{u}  t^*}))\leq 0.$$

$$e_0^2e^{\lambda_{u}  t^*}+\frac{C_{u}^2}{\lambda_{u}^2}(\frac{2 t^*}{\lambda_{u}}+(t^*)^2+\frac{2}{\lambda_{u}^2}(1-e^{\lambda_{u}  t^*}))\leq e_0^2.$$



i.e.

$$(\delta_{e_0}^u(t^*))^2\leq e_0^2.$$

Hence: $\delta_{e_0}^u(t^*)\leq e_0.$ It remains to show:
$\delta_{e_0}^u(t)\leq e_0$ for $t\in [0,t^*]$.

Consider the 1rst and 2nd derivative $\delta'(\cdot)$ and
$\delta''(\cdot)$ of $\delta(\cdot)$.
We have:

$\delta'(t)=\lambda_{u}e_0^2e^{\lambda_{u}t}+\frac{C_{u}^2}{\lambda_{u}^2}(2t+\frac{2}{\lambda_{u}}-\frac{2}{\lambda_{u}}e^{\lambda_{u}t})$

$\delta''(t)=\lambda_{u}^2e_0^2e^{\lambda_{u}t}+\frac{C_{u}^2}{\lambda_{u}^2}(2-2e^{\lambda_{u}t}).$

Hence $\delta''(t)>0$ for all $t\geq 0$.
On the other hand, for $t=0$, $\delta'(t)=\lambda_u e_0^2<0$, and for $t$ sufficiently large, $\delta'(t)>0$. Hence, $\delta'(\cdot)$ is strictly increasing and has a unique root. 
It follows that the equation $\delta(t)=e_0$ 
has a unique solution $t^{**}$ for $t>0$. Besides,
$\delta(t)\leq e_0$ for $t\in [0,t^{**}]$, and $\delta(t)\geq e_0$ for 
$t\in [t^{**},+\infty)$.
Since we have shown: $\delta(t^*)\leq e_0$, it follows $t^*\leq t^{**}$ and
$\delta(t)\leq e_0$ for $t\in [0,t^{*}]$.

\hspace*{10cm} $\Box$
\end{proof}

\newpage
\section*{Appendix 2: Numerical results}
\begin{table}
\centering
\begin{tabular}{|c|c|c|c|}
   \hline
    Dimension & Extended mode length & $ \| y_i^{\pi^\varepsilon} (T) - y_i^f \| $ for $\sigma=1$ & $ \| y_i^{\pi^\varepsilon} (T) - y_i^f \| $ for $\sigma=0.5$   \\
    \hline
    $i=1\ (M_i=5)$ & 1 & 0.27642 & 0.33869 \\
    \cline{2-4} 
        & 2& 0.44496  & 0.39068\\
     \cline{2-4} 
        & 4& 0.15294  & 0.22024\\
    \hline 
    
    $i=2\ (M_i=10)$ & 1 & 0.39904  & 0.50251\\
    \cline{2-4} 
        & 2& 0.50092  & 0.58500\\
     \cline{2-4} 
        & 4& 0.16738 & 0.31440\\
        \hline
\end{tabular}
\caption{Value $\| y_i^{\pi^\varepsilon} (T) - y_i^f \|$
for $\sigma=1$ and $\sigma=0.5$ ($T=2$, $i=1,2$).}
\end{table}
\begin{table}
\centering
\begin{tabular}{|c|c|c|}
   \hline
     Extended mode length & $\|P {y}^{\pi^\varepsilon}_2(T)-y_1^f\|$ for $\sigma=1$ & $\|P {y}^{\pi^\varepsilon}_2(T)-y_1^f\|$ for $\sigma=0.5$  \\
     \hline
    1 &  0.67429 & 0.77322\\
\hline        
 2&  0.27501 & 0.72322\\
\hline        
4 & 0.31385 & 0.21481\\
        \hline
\end{tabular}
\caption{Projection value $\|P {y}^{\pi^\varepsilon}_2(T)-y_1^f\|$ for $\sigma=1$, $\sigma=0.5$ ($T=2$).}
\end{table}
%
\newpage
\begin{figure}
\centering
\begin{tabular}{cc}
Length 1, $M_1=5$ & Length 1, $M_2=10$ \\
\includegraphics[width=0.47\textwidth]{L1_T2.png} & \includegraphics[width=0.47\textwidth]{L1_T2_HD.png} \\
Length 2, $M_1=5$ & Length 2, $M_2=10$ \\
\includegraphics[width=0.47\textwidth]{L2_T2.png} & \includegraphics[width=0.47\textwidth]{L2_T2_HD.png} \\
Length 4, $M_1=5$ & Length 4, $M_2=10$ \\
\includegraphics[width=0.47\textwidth]{L4_T2.png} & \includegraphics[width=0.47\textwidth]{L4_T2_HD.png}
\end{tabular}
\caption{Simulation of the controllers for $\sigma=1$.}
\end{figure}

\newpage

\begin{figure}
\centering
\begin{tabular}{cc}
Length 1, $M_1=5$ & Length 1, $M_2=10$ \\
\includegraphics[width=0.47\textwidth]{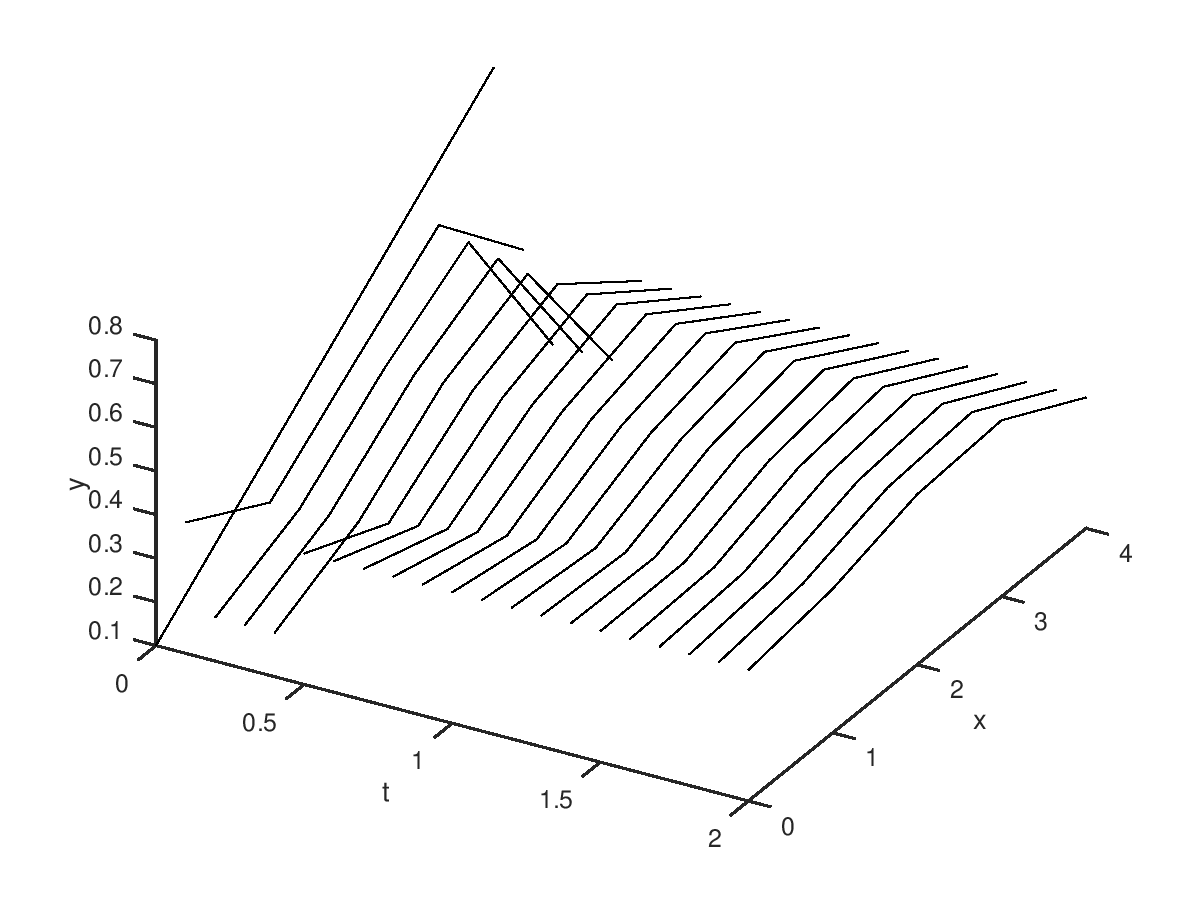} & \includegraphics[width=0.47\textwidth]{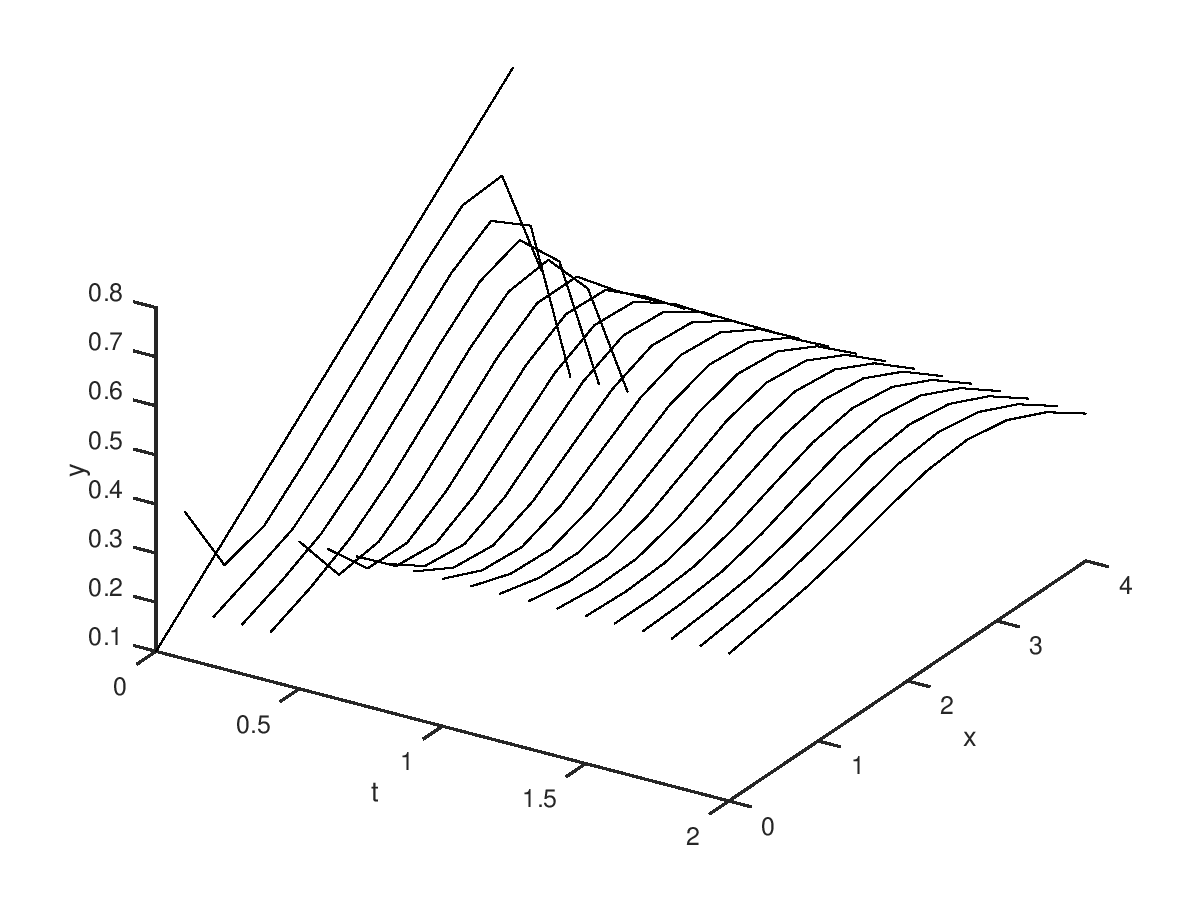} \\
Length 2, $M_1=5$ & Length 2, $M_2=10$ \\
\includegraphics[width=0.47\textwidth]{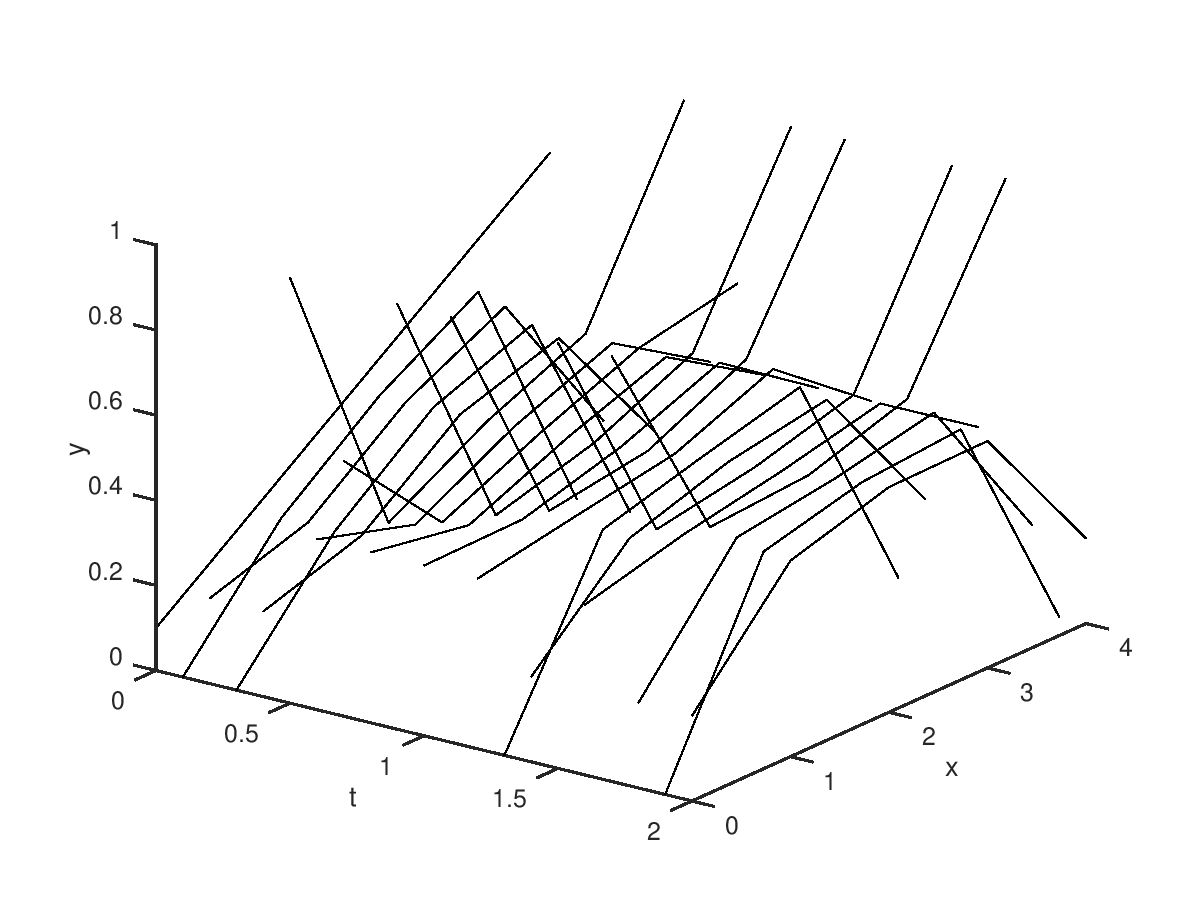} & \includegraphics[width=0.47\textwidth]{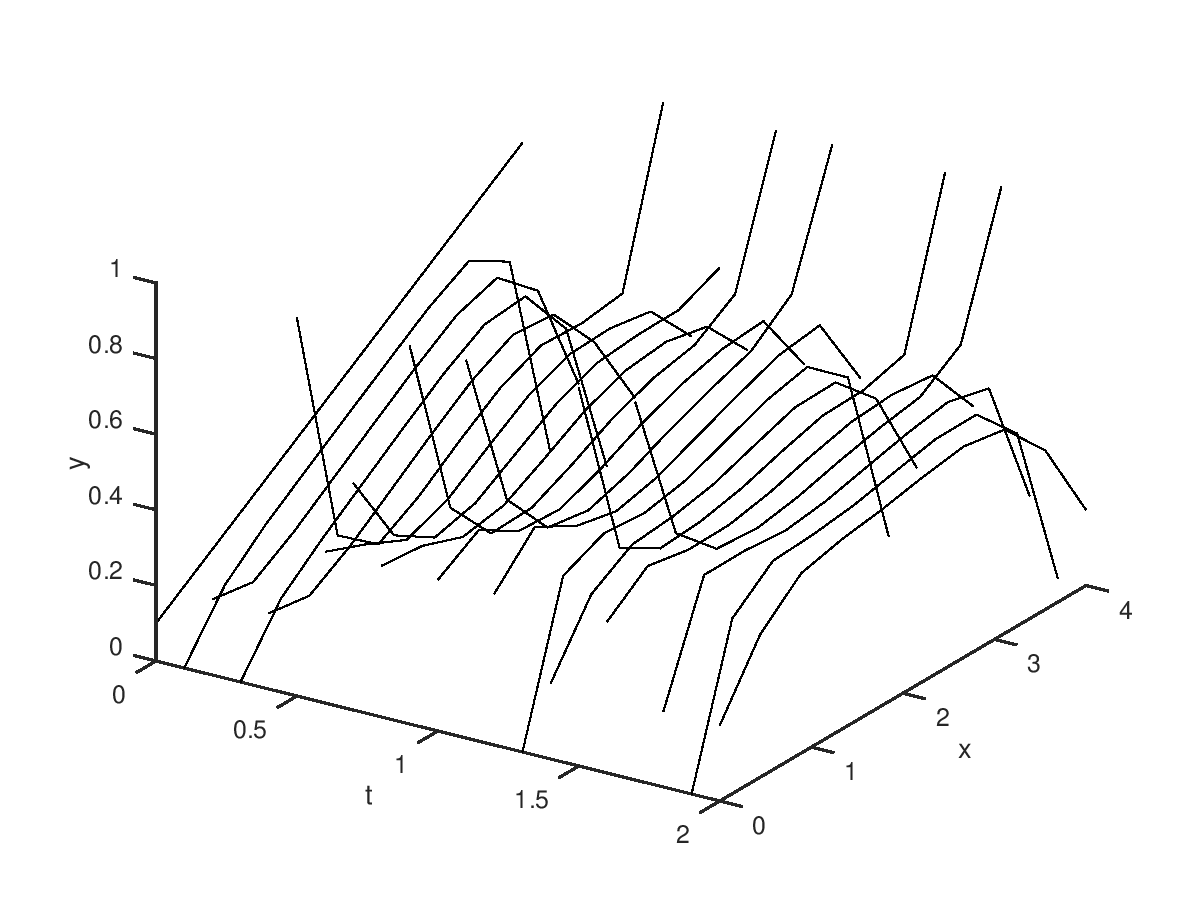} \\
Length 4, $M_1=5$ & Length 4, $M_2=10$ \\
\includegraphics[width=0.47\textwidth]{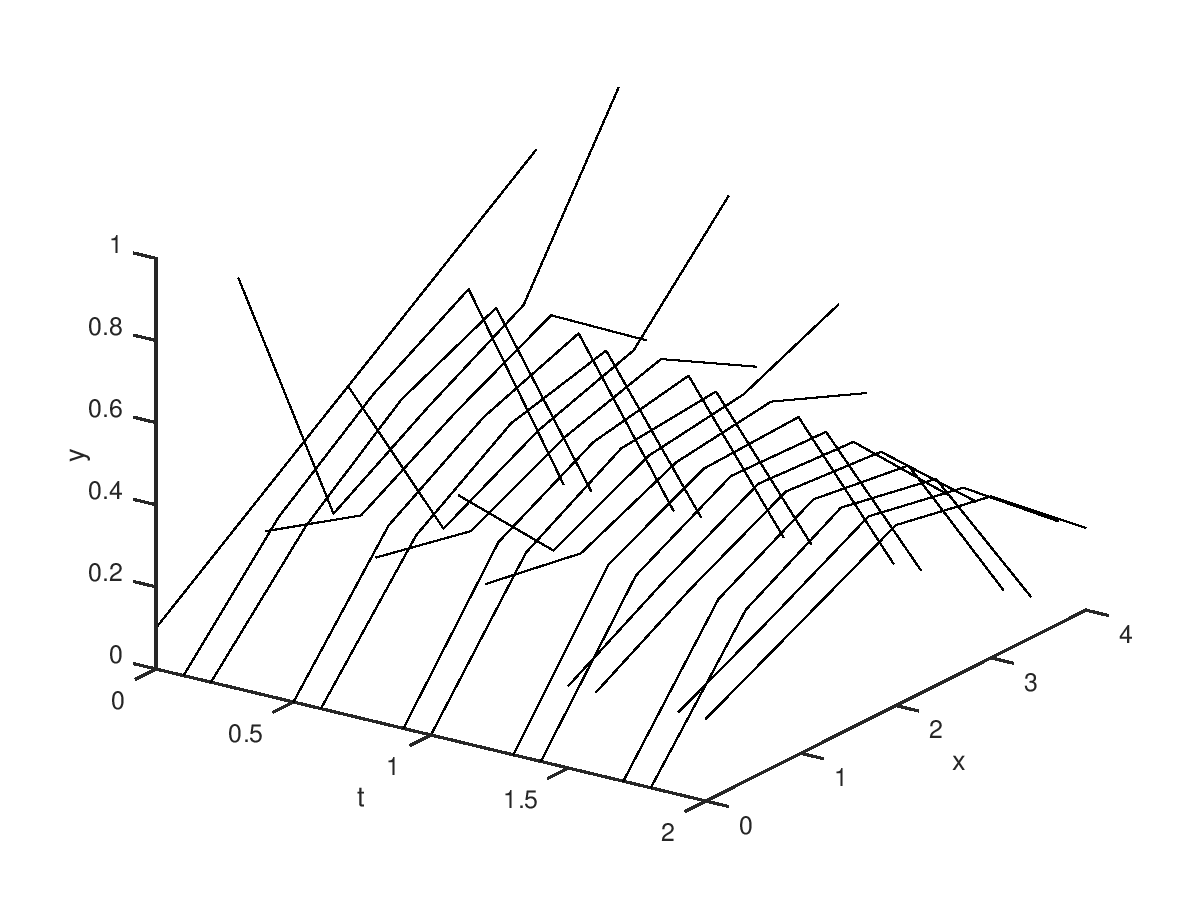} & \includegraphics[width=0.47\textwidth]{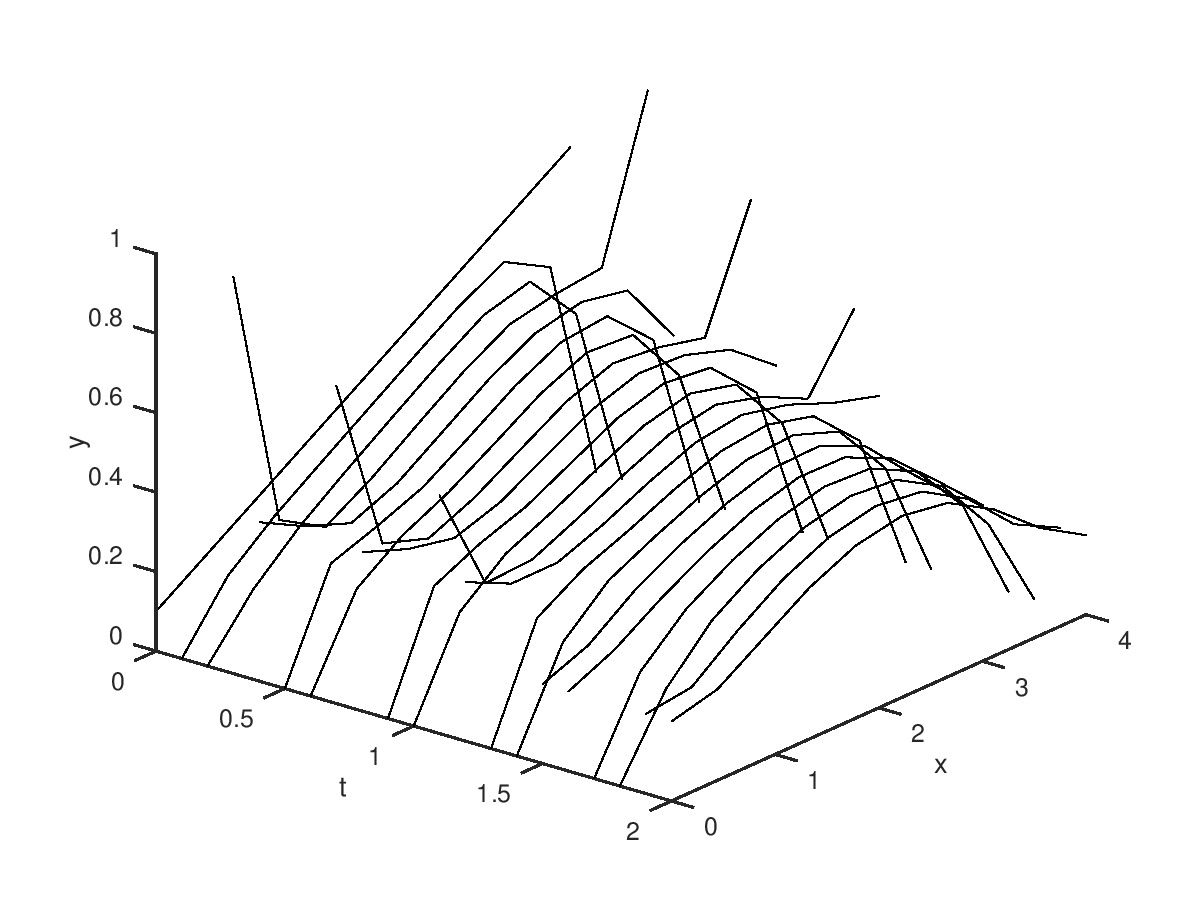}
\end{tabular}
\caption{Simulation of the controllers for $\sigma=0.5$.}
\end{figure}

\end{document}